\newtheorem{theorem}{Theorem}
\newtheorem{lemma}{Lemma}
\theoremstyle{remark}
\newcommand{\ve}{{\varepsilon}}
\newcommand{\E}{{\mathbb{E}}}
\newcommand{\xmath}[1]{\ensuremath{#1}\xspace}
\renewcommand{\Pr}{\xmath{\mathbb{P}}}
\newcommand{\Uup}{U^\uparrow}
\newcommand{\Ldown}{L^\downarrow}
\newcommand{\Ijn}{\mathcal{I}_{j,n}}
\newcommand{\Tl}{T_{left}}
\newcommand{\Tr}{T_{right}}
\begin{document}

\title[Exact Simulation of Multidimensional RBM]{Exact Simulation of
  Multidimensional\\ Reflected Brownian Motion} 
\author[Blanchet and Murthy]{{\large
    J\MakeLowercase{ose} B\MakeLowercase{lanchet}} \hspace{32pt}
  {\large K\MakeLowercase{arthyek} M\MakeLowercase{urthy}}\\\\
  \textit{\large S\MakeLowercase{tanford} U\MakeLowercase{nivesity}
    \hspace{20pt} C\MakeLowercase{olumbia}
    U\MakeLowercase{niversity}}} 

%\author{Blanchet, J.}
\address{Stanford University, Management Science \& Engineering
  Department, 475 Via Ortega, Stanford, CA 94305-4121, United States.} 
\email{jblanche@stanford.edu} 
%\author{Murthy, K.}  
\address{Columbia University, Department of Industrial
  Engineering \& Operations Research, 340 S. W. Mudd Building, 500
  W. 120 Street, New York, NY 10027, United States.}
\email{karthyek.murthy@columbia.edu} 

\thanks{Author 1 gratefully acknowledges the support from NSF Award
  1538217.} 

\keywords{Unbiased Sampling; Refine until Accept / Reject; Tolerance
  Enforced Simulation; Acceptance / Rejection sampling; $\ve$ - strong
  simulation; intersection layers; iterative algorithm.}

\date{Dec 15, 2016} 

\maketitle

\begin{abstract}
 We present the first exact simulation method for multidimensional
  reflected Brownian motion (RBM). Exact simulation in this setting is
  challenging because of the presence of correlated local-time-like
  terms in the definition of RBM. We apply recently developed
  so-called $\varepsilon $-strong simulation techniques (also known as
  Tolerance-Enforced Simulation) which allow us to provide a
  piece-wise linear approximation to RBM with $\varepsilon $
  (deterministic)\ error in uniform norm. A novel conditional
  acceptance / rejection step is then used to eliminate the error. In
  particular, we condition on a suitably designed information
  structure so that a feasible proposal distribution can be applied.
\end{abstract}

\section{Introduction}
\label{Sec-Intro}
This paper is a contribution to the theory of exact simulation for
stochastic differential equations (SDEs). In particular, we present
the first exact simulation algorithm for multidimensional reflected
Brownian motion (RBM).

Multidimensional RBM was introduced by Harrison and Reiman in
\cite{MR606992} and it figures prominently in stochastic Operations
Research. It turns out that RBM approximates the workload at each
station in so-called generalized Jackson networks, which are comprised
of single-server queues connected via Markovian routing. The
approximation holds in heavy traffic (that is, as the system
approaches 100\% utilization) and it is applicable in great generality
(assuming only a functional central limit theorem for the arrival
process and the service requirements at each station, see for example
\cite{Reiman_1984} and \cite{chen2001fundamentals}). Following
\cite{MR606992}, we refer a $d$-dimensional stochastic process
$(\mathbf{Y}(t) : t \geq 0)$ that satisfies the following properties
as a reflected Brownian motion (RBM):
\begin{itemize}
\item[1)] $\mathbf{Y}(\cdot)$ is a Markov process with stationary
  transition probabilities, continuous sample paths taking values in
  the non-negative orthant of $\mathbb{R}^d,$
\item[2)] $\mathbf{Y}(\cdot)$ behaves in the interior of positive orthant like
  a $d$-dimensional Brownian motion (either standard or with a
  constant drift and diffusion matrix),
\item[3)] $\mathbf{Y}(\cdot)$ reflects instantaneously at the boundary of the
positive orthant, and
\item[4)] the direction of reflection anywhere on the boundary surface
  where the $i$-th component $Y_i = 0$ is the $i$-th column of the
  $d \times d$ {\textit reflection matrix} $R.$ It is required that
  $R$ is of the form $R = I - Q^T,$ where $Q$ is a non-negative
  $d \times d$ matrix with zeros on the diagonal and spectral radius
  strictly smaller than unity.
\end{itemize}

\noindent
\textbf{RBM as a solution of Skorokhod problem.} Let
$(\mathbf{X}(t): t \geq 0)$ denote a $d$-dimensional Brownian motion
that behaves similar to RBM $\mathbf{Y}(\cdot)$ in the interior of the
positive orthant. Then it is well known that the RBM
$\mathbf{Y}(\cdot),$ defined above, can be represented as
\begin{align}
  \mathbf{Y}(t) = \mathbf{X}(t) + R\mathbf{L}(t),
  \label{SKO-PROB}
\end{align}
with $\mathbf{Y}(t)=(Y_{1}(t),\ldots ,Y_{d}(t))^{T}$ and $\mathbf{L}%
(t)=(L_{1}(t),\ldots ,L_{d}(t))^{T}$ satisfying,
\begin{itemize}
\item[1)] $Y_{i}(t)\geq 0\text{ for all }t\geq 0$,
\item[2)] $L_{i}(t)$ is non-decreasing in $t$, and $L_{i}(0)=0$
\item[3)] $\int_{0}^{t}Y_{i}(s){d}L_{i}(s)=0$, 
\end{itemize}
for each $t\geq 0$ and $i=1,\ldots ,d$ (see, for example,
\cite{MR606992,chen2001fundamentals}). We call $\mathbf{X}(\cdot)$ the
driving (or free) process, and $\mathbf{Y}(\cdot)$ the reflected
process. The map $S$ that takes $\mathbf{X}(\cdot)$ to
$\mathbf{Y}(\cdot)$ in (\ref{SKO-PROB}) is referred to as the
\textit{Skorokhod map.} Item 3) above simply states that the process
$L_i(t)$ increases only at those times $t$ where $Y_i(t) = 0.$ Because
of this property, the process $L_i(\cdot)$ behaves like the local time
of Brownian motion at the origin. Consequently, the term
$R\mathbf{L}(t)$ appearing in (\ref{SKO-PROB}) is not a standard
``drift'' term, and cannot be dealt with using change of measure
techniques as in \cite{beskos2005}, \cite{MR2274855} or
\cite{pollock2016}.

All the generic exact simulation techniques for diffusions are based
on the acceptance / rejection, after applying Girsanov's
transformation. The difficulty in applying acceptance / rejection in
the multidimensional RBM setting is that there is no natural proposal
distribution that can be used to \textquotedblleft
dominate\textquotedblright\ the target process directly.  In
particular, multidimensional RBM is not absolutely continuous with
respect to any natural process that is easily simulatable. Note that
in one dimension one can simulate RBM directly by keeping track of the
running maximum of the driving Brownian motion, % directly
% using the representation, 
% \begin{align*}
%   Y(t) = X(t) + \sup_{0 \leq s \leq t} \big( - X(t) \big)^+,
% \end{align*}
and so these challenging issues arise only in dimensions greater than
one. Simulation techniques for one dimensional reflected processes
have been studied in \cite{etore2013exact}.
\newline 

\noindent
\textbf{Our contributions.}  This paper is dedicated to the proof of
the following result. Let $\mathbf{Y}(\cdot)$ denote the
multi-dimensional RBM in (\ref{SKO-PROB}).
\begin{theorem}
  Given a deterministic time $T \in (0,1),$ it is possible to simulate
  $\mathbf{Y}\left( T\right) $ without any bias.
\end{theorem}

An obstacle to naively using the traditional acceptance / rejection
algorithm (see, for example, \cite{Asmussen&Glynn_2007}) in the
simulation of diffusions is that the probability density from which we
want to sample is typically unknown. In our setting, while the
probability density of $\mathbf{Y}(T)$ itself may be unknown, we
propose to simulate enough information about the RBM
$\mathbf{Y}(\cdot),$ and perform acceptance / rejection sampling for
the probability density of $\mathbf{Y}(T)$ conditional on the
simulated filtration. Ideally, the simulated information set should be
a collection of random variables such that the probability density of
$\mathbf{Y}(T)$ conditional on the simulated information, denoted here
by $f,$ is computable. If obtaining such a computable conditional
density $f$ is feasible, then one can easily perform an acceptance /
rejection step of form,
\begin{align}
  V < \frac{f(\mathbf{Z})}{Cg(\mathbf{Z})} =:
  L(\mathbf{Z}), 
\label{BASIC-AR}
\end{align}
where $g$ is a suitable proposal density from which proposal samples
$\mathbf{Z}$ are drawn, $V$ is an independently generated random
variable distributed uniformly in $[0,1],$ and $C$ is a suitable
scaling constant. While this line of thought is interesting, a key
difficulty arises from the fact that there is no easily simulatable
information structure such that the density of $\mathbf{Y}(T)$
conditional on the simulated information is exactly computable.

To overcome this difficulty, we introduce a novel sampling scheme that
we call as \textit{Refine until Accept / Reject}, which relaxes the
requirement that the conditional density $f$ is known exactly. The key
observation behind this algorithm is that in order to accept the
proposal $\mathbf{Z},$ we simply need to decide if inequality
(\ref{BASIC-AR}) holds; \textit{we do not need to know the right-hand
  side of (\ref{BASIC-AR}) exactly.} So, instead of having direct
access to the probability density of $\mathbf{Y}$ conditional on
$\mathcal{I},$ if we can simply obtain an approximation to the
right-hand side of (\ref{BASIC-AR}) that ensures inequality
(\ref{BASIC-AR}) holds, we can accept the proposed sample without
incurring any sampling error. We present this idea clearly in a
stylized setting in Section \ref{Sec-RAR-Intro} along with an outline
of its applicability to the simulation of RBM in Section
\ref{Sec-RAR-RBM}. We use Section \ref{SEC-SAMP-PROC} to fully present
our algorithm for exactly simulating multidimensional RBM. Our
algorithm relies on the recently developed $\varepsilon$-strong
simulation (also known as Tolerance-Enforced simulation) techniques in
\cite{MR2995793} to first derive an approximation of the RBM, which
then is used to make one of the following decisions: Accept, reject,
(or) refine the approximation of the right-hand side in
(\ref{BASIC-AR}) until either the proposal can be conclusively
accepted or rejected.

We wish to finish this introduction with a critical discussion of our
main result. We do believe that the conditional acceptance / rejection
strategy introduced here is of significant value as it addresses an
important open problem (exact sampling of multidimensional
RBM). Nevertheless, we must recognize that the algorithm, in its
current form, is mostly of theoretical interest. Unfortunately, in
Section \ref{Sec-Complexity}, we identify that the expected running
time of the algorithm is infinite. While we are investigating
strategies to mitigate this problem, we feel that the nucleus of our
sampling algorithm, namely refine until accept / reject, might propel
further research in exact sampling of various stochastic processes in
addition to the search for efficient sampling algorithms for
simulating multidimensional RBM.

\section{Overview of the sampling scheme}
\label{SEC-SAMP-PROC}
We first introduce some notational conventions.  Throughout the paper
we consider the driving (free) stochastic process $\mathbf{X}(\cdot )$
to be a standard Brownian motion in $d$-dimensions, which write as
$\mathbf{X}(\cdot) =\mathbf{B}(\cdot ).$ The reflected process
$\mathbf{Y}(\cdot )$ in \eqref{SKO-PROB} is referred to as the
Reflected Brownian motion (RBM). The extension of our development to
the case in which $\mathbf{X}(\cdot) $ is a Brownian motion with
constant drift and diffusion coefficients is straightforward.  As
mentioned in the Introduction, the map $S$ that takes
$\mathbf{X}(\cdot)$ to $\mathbf{Y}(\cdot)$ in (\ref{SKO-PROB}) is
referred to as the Skorokhod reflection map.

While all the variables and stochastic processes taking values in
$\mathbb{R}^{d}$ for $d>1$ are typeset in boldface, their
1-dimensional counterparts are not. For example, if
$\mathbf{B}(\cdot )$ denotes the Brownian motion in multiple
dimensions, then $B(\cdot )$ is to be understood as 1-dimensional
Brownian motion.

\subsection{Refine until accept / reject sampling scheme: An introduction}
\label{Sec-RAR-Intro}
In this section, let us restrict our attention to the following
sampling problem to which our RBM simulation problem is later shown to
be reduced: Let $\Delta $ and $Y$ be two independent random variables,
and $W = Y + \Delta.$ For simplicity, let us assume that the
probability density of $\Delta $, denoted by
$f_{\Delta }\left( \cdot \right) $, is continuous on its support which
is given by the interval $[-a,a]$ for some $a >0.$ Consecutively,
$\sup_{x\in \lbrack -a,a]}f_{\Delta }\left( x\right) \leq C$ for some
$C\in \left( 0,\infty \right) $. Let us assume that $Y$ is also
supported on $[-a,a]$ with an arbitrary distribution from which we do
not know how to sample exactly. Our objective in this section is to
obtain samples from the distribution of $W = Y + \Delta.$

Given $Y=y$, the density of $W$ evaluated at $w$ is simply
$f_{\Delta }\left( w-y\right) $.  Moreover, the support of such
density is contained in the interval $[-2a,2a]$. So, in order to
simulate $W$ conditional on $y$, we could propose $W$ uniformly
distributed in $[-2a,2a]$, and accept if
\begin{equation}
  V<C^{-1}f_{\Delta }\left( W-y\right) ,  \label{INEQ_AR}
\end{equation}%
where $V\sim \text{Unif}\left( 0,1\right) $ (uniform in $(0,1)$), and
independent of $%
W $ and $Y=y$.

The key observation is that in order to accept $W$ we simply need to
decide if inequality (\ref{INEQ_AR}) holds; \textit{we do not actually
  need to know the value of }$y$. So, instead of having direct access
to $Y$, there are settings, as we demonstrate in our RBM simulation,
where we might know $\left\{ Y^{\varepsilon _{n}}\right\} _{n\geq 1}$,
independent of $\Delta $, which converges to $Y$; say
$\left\Vert Y^{\varepsilon _{n}}-Y\right\Vert \leq \varepsilon _{n}$
for some $\varepsilon _{n}\rightarrow 0$ as $n\rightarrow \infty $.
Then under modest continuity properties of
$f_{\Delta }\left( \cdot \right) $, for instance say $%
\left\vert f_{\Delta }\left( x\right) -f_{\Delta }\left( x^{\prime
    }\right) \right\vert \leq K\left\Vert x-x^{\prime }\right\Vert $,
we can accept $W$ if
\begin{equation}
  V\leq C^{-1}f_{\Delta }\left( W-Y^{\varepsilon _{n}}\right)
  -KC^{-1}\varepsilon _{n},  \label{I1_INT}
\end{equation}%
or reject $W$ if 
\begin{equation}
  V\geq C^{-1}f_{\Delta }\left( W-Y^{\varepsilon _{n}}\right)
  + KC^{-1}\varepsilon _{n}.  \label{I2_INT}
\end{equation}
Since $\varepsilon _{n}\rightarrow 0$ and
$V=C^{-1}f_{\Delta }\left( W-Y\right) $ has zero probability of
occurring, one must be able to eventually decide whether to accept or
reject. As the outlined sampling procedure does one of the following
-- accept / reject the proposed sample of $W,$ or seek for a
refinement of $Y^{\varepsilon_n}$ until the proposal can be accepted
or rejected -- we call the sampling procedure as \textit{refine until
  accept / reject}.

It is useful to remember the following requirements which are
necessary for the procedure underlying (\ref{INEQ_AR}), (\ref{I1_INT})
and (\ref{I2_INT}) to yield exact samples of $W = Y + \Delta:$
\begin{itemize}
\item[R1)] The probability density of $\Delta,$ denoted by
  $f_{\Delta}(\cdot),$ has bounded support, and is Lipschitz
  continuous; that is, there exists $K > 0$ such that
  $\vert f_\Delta(x) - f_\Delta(x') \vert \leq K \Vert x - x' \Vert$
  for all $x,x'.$
\item[R2)] Conditional on $Y = y$ and all the information simulated to
  obtain $Y^{\varepsilon_n},$ the probability density of $W$ evaluated
  at $w$ is simply $f_{\Delta}(w-y).$
\end{itemize}

\subsection{An outline of the application of refine until accept /
  reject for multi-dimensional RBM}
\label{Sec-RAR-RBM}
Revisiting our objective of exact sampling of RBM, our plan is to
apply the sampling strategy in Section \ref{Sec-RAR-Intro} by
introducing a suitable conditioning. For this purpose, we use the
following key facts about multidimensional RBM. First, the fact that
if the driving process is Brownian motion then, for fixed $T$,
$\Pr(Y_{i}\left( T\right) =0)=0$ for any $%
i\in \{1,...,d\}$.
In addition, since $\mathbf{Y}\left( \cdot \right) $ is continuous,
there exists a $\delta >0$ and an interval $%
(T_{left},T_{right}]$
which contains $T$, satisfying $Y_{i}\left( s\right) >\delta$ for all
$i\in \{1,...,d\},$ and % As $T and
therefore,
\begin{align*}
  \mathbf{Y}(s)=\mathbf{Y}(T_{left})+\mathbf{X}(s)-\mathbf{X}(T_{left}),
\end{align*}
for all $s\in
(T_{left},T_{right}].$ In other words, the interval $(\Tl,
\Tr)$ is such that the RBM
$\mathbf{Y}(t)$ does not hit the reflecting boundary anywhere during $t
\in (\Tl,\Tr]$ and consecutively, $\mathbf{L}(s) - \mathbf{L}(\Tl) =
0$ for all $s \in (\Tl, \Tr).$
% \in (T_{left}, T_{right}),$ we have that 
% \begin{equation}
% \mathbf{Y}(T)=\mathbf{Y}(T_{left})+\mathbf{X}(T)-\mathbf{X}(T_{left}):=%
% \mathbf{Y}(T_{left})+\mathbf{\Delta}.  \label{EQ_L_ZERO}
% \end{equation}

So, our plan is to first simulate enough information about
$\mathbf{X}\left( \cdot \right) $ (that is, the driving Brownian
motion) so that conditional on such information we have the
representation
\begin{align}
  \mathbf{Y}(T) &=\mathbf{Y}(T_{left})+\big(\mathbf{X}(T)-\mathbf{X}(T_{left})\big) \nonumber\\
&=:\mathbf{Y}(T_{left})+ \mathbf{\Delta},  \label{EQ_L_ZERO}
\end{align}
for a suitable $T_{left}$ identified from the simulated
information. Naturally, we identify $\mathbf{Y}(T)$ and
$\mathbf{Y}(\Tl),$ respectively, with the variables $W$ and $Y$
introduced in the abstract setting discussed previously in Section
\ref{Sec-RAR-Intro}. Therefore, our objective is to simulate just
enough information so that conditioned on the simulated information,
\begin{itemize}
\item[R1')] the probability density of the Brownian increment 
%\begin{align*}
  $\mathbf{\Delta} := \mathbf{X}(T) - \mathbf{X}(\Tl),$
%\end{align*}
  denoted by $f_{\mathbf{\Delta}}(\cdot),$ has bounded support, and is
  lipschitz continuous, and
\item[R2')] the probability density of $\mathbf{Y}(T),$ evaluated at
  $w,$ is simply $f_{\mathbf{\Delta}}(w-\mathbf{Y}(\Tl)).$
% \item[R2')] $\mathbf{Y}(T_{left})$ is independent of the increment 
%   $\mathbf{\Delta}.$
\end{itemize}
The requirements R1') and R2') mirror the earlier requirements R1) and
R2) in the abstract setting in Section \ref{Sec-RAR-RBM}. Once these
requirements are met, we can follow the logic in Section
\ref{Sec-RAR-Intro} to develop a refine until accept / reject sampler
for obtaining samples from the distribution of $\mathbf{Y}(T)$. Thus,
the proposed algorithm for exact sampling of $\mathbf{Y}(T)$ can be
roughly divided into two steps:
\begin{itemize}
\item[1)] a preconditioning step where we simulate enough information to
  arrive at the representation (\ref{EQ_L_ZERO}), and 
\item[2)] exploiting the representation (\ref{EQ_L_ZERO}) arrived in
  the preconditioning step, we perform `refine until accept / reject' to
  obtain samples of $\mathbf{Y}(T).$
\end{itemize}

\subsubsection{Overview of the preconditioning step.}
In order to sample enough information which will enable us to obtain
the representation (\ref{EQ_L_ZERO}) along with satisfying the above
two requirements, we use another important property of the Skorokhod
map, $S$, namely, $S$ is Lipschitz continuous as a function of the
driving process in the uniform norm over the time interval
$[0,1]$. Consequently, to identify $T_{left}$ we use so-called
$\varepsilon-$strong simulation techniques, also known as
Tolerance-Enforced Simulation (TES), which allows us to simulate
$\mathbf{X}^{\varepsilon }(\cdot) $ piecewise linear and guaranteed to
be within $\varepsilon$-close in uniform norm to $\mathbf{X}(\cdot) $.
This construction is, conceptually at least, not complicated. There
are several methods that can be applied for the same: based on
wavelets as in \cite{blanchet2012steady}, localization using stopping
times as in \cite{chen2013localization}, or tracking jointly the
maxima and minima on dyadic intervals as in \cite{MR2995793}. We have
chosen to use the latter construction, thereby ultimately obtaining
$(T_{left},T_{right}]$ as a dyadic interval (i.e. $T_{left}=i2^{-N}$
and $T_{right}=j2^{-N}$ for some $0\leq i<j\leq 2^{N}$ and $N>0$). The
reason for choosing the construction in \cite{MR2995793} is because it
allows us to recursively develop more refined approximations
$\mathbf{X}^{\varepsilon'}$ for desired $\varepsilon' < \varepsilon$
while preserving the conditional independence of $\mathbf{\Delta}$ and
$\mathbf{Y}( T_{left})$ given all the information required to conclude
that $\mathbf{L}( T_{right}) -\mathbf{L}( T_{left}) =0.$ % subsequently,
% \begin{align*}
%   \mathbf{Y}(T) = \mathbf{Y}(T_{left}) + \big( \mathbf{X}(T)
%   -\mathbf{X}(T_{left})\big) = \mathbf{Y}(T_{left}) + \mathbf{\Delta},  
% \end{align*} 
% due to the representation as solution to the Skorokhod problem in
% (\ref{SKO-PROB}).
Refer Section \ref{SEC-STR-APPROX} for an overview of the algorithm in
\cite{MR2995793} that allows us to obtain the desired piecewise linear
approximation $\mathbf{X}^\varepsilon(\cdot)$ for the driving Brownian
motion.

% For example, if $X\left( \cdot \right) $ is standard
%one dimensional Brownian motion, we can first simulate the first passage
%time, $\tau _{-\varepsilon /2,\varepsilon /2}$, to level $\varepsilon /2$ or
%$-\varepsilon /2$; the distribution of $\tau _{-\varepsilon /2,\varepsilon
%/2}$ is known. Then let $X(\tau _{-\varepsilon /2,\varepsilon
%/2})=\varepsilon /2$ or $X(\tau _{-\varepsilon /2,\varepsilon
%/2})=-\varepsilon /2$ each with probability 1/2. One can define $%
%X^{\varepsilon }\left( t\right) =tX(\tau _{-\varepsilon /2,\varepsilon
%/2})/\tau _{-\varepsilon /2,\varepsilon /2}$, for $t\in \lbrack 0,\tau
%_{-\varepsilon /2,\varepsilon /2}]$ and thus $|X^{\varepsilon }\left(
%t\right) -X\left( t\right) |\leq \varepsilon $ for all $t\in \lbrack 0,\tau
%_{-\varepsilon /2,\varepsilon /2}]$. One can continue simulating increments
%at stopping times until covering the whole time interval $[0,1]$, thereby
%obtaining a polygonal approximation which satisfies $\sup_{t\in \lbrack
%0,1]}\left\vert X^{\varepsilon }\left( t\right) -X\left( t\right)
%\right\vert \leq \varepsilon $. It turns out that one can modify this
%sampling (CITE) strategy and actually refine $\varepsilon $, thereby
%yielding a sequence $\mathbf{X}^{\varepsilon _{n}}\left( \cdot \right) $ for
%which the uniform error is guaranteed to be less than $\varepsilon _{n}>0$.

The Skorokhod problem is easy to solve for piecewise linear input
$\mathbf{X}^{\varepsilon }$, because in such case the solution to
Skorokhod problem, denoted by
$(\mathbf{Y}^{\varepsilon },\mathbf{L}^{\varepsilon })$ is piecewise
linear as well, and the gradients can be obtained by solving linear
systems based on (\ref{SKO-PROB}) (see Section
\ref{Sec-Str-Approx-RBM} for an explicit algorithm). Since the
piecewise linear approximation $\mathbf{Y}^{\varepsilon _{n}}(\cdot)$
to RBM $\mathbf{Y}(\cdot)$ can be identified explicitly for a
computable $\{\varepsilon _{n}\}_{n\geq 1}$ such that
$\varepsilon _{n}\rightarrow 0$ as $n\rightarrow \infty $, the
Lipschitz continuity of $\mathbf{Y} = S(\mathbf{X})$ as a function of
$\mathbf{X}$, combined with the approximation $%
\mathbf{X}^{\varepsilon _{n}}$,
and the fact that $\mathbf{Y}\left( T\right) $ must be strictly
positive coordinate-wise, eventually can be used to identify
$T_{left}$ used in the additive representation (\ref{EQ_L_ZERO}). See
Section \ref{Sec-Precondition} for details.

\subsubsection{Requirements for refine until accept / reject step.}
Once we arrive at representation (\ref{EQ_L_ZERO}), we can use the
refine until accept / reject algorithm introduced in Section
\ref{Sec-RAR-Intro} to obtain samples of $\mathbf{Y}(T).$ In order to
be able to do this, we need to guarantee that the requirements R1')
and R2') mentioned earlier are met. % the following two conditions
% that mirror requirements 1) and 2) in Section \ref{Sec-RAR-Intro}:
% \begin{itemize}
% \item[1)] The probability density, $f_{\Delta}(\cdot)$ of the Brownian
%   increment $\mathbf{\Delta} = \mathbf{X}(T) - \mathbf{X}(T_{left})$
%   conditioned on the simulated information, is Lipschitz continuous
%   and computable.
%   \item[2)] $\mathbf{\Delta}$ is independent of $\mathbf{Y}(T_{left})$
%     is conditioned on the simulated information.
% \end{itemize}
Our construction of $\mathbf{X}^{\varepsilon }\left( \cdot \right) $,
as indicated earlier, based on \cite{MR2995793} will give rise to a
conditional density for $\mathbf{\Delta},$ denoted by
$f_{\mathbf{\Delta}}(\cdot),$ which is expressed as an infinite
series. So, the Lipschitz continuity of $f_{\mathbf{\Delta}}(\cdot) $
used in (\ref{I1_INT}) and (\ref{I2_INT}) is obtained by means of some
careful estimates.  % As we shall see, the value of
% $\mathbf{Y}\left( T_{left}\right) $ depends on the information prior
% to time $T_{left}$, we are able to obtain the independence of
% $\mathbf{\Delta}$ given all the information required to compute
% $\mathbf{Y}\left( T_{left}\right) $, even including refinements
% $\mathbf{Y}^{\varepsilon _{n}}\left( T_{left}\right) $ that we can
% keep making by increasing $n\geq 1.$
Consequently, as we shall see in Section \ref{Sec-Cond-Dens-Delta}, we
will be able to implement the basic refine until accept / reject
strategy underlying (\ref{INEQ_AR}), (\ref{I1_INT}), and
(\ref{I2_INT}).

In Section \ref{SEC-SAMP-PROC} below, we provide more specific details
behind our sampling methodology and point to future relevant sections
where details are fully fleshed out.

\section{The exact sampling scheme}
\label{SEC-SAMP-PROC}
% We first discuss the $\varepsilon-$strong simulation techniques for
% approximating Brownian motion and reflected Brownian motion in order
% to fully present the preconditioning and refine until accept / reject
% steps of the exact sampling algorithm. 
We first describe essential components of the pre-conditioning step
(such as $\varepsilon-$strong approximation techniques) before
presenting the refine until accept / reject sampler for
$\mathbf{Y}(T).$

\subsection{The preconditioning step}
\label{Sec-Precondition}
As mentioned in Section \ref{Sec-RAR-RBM}, the objective of the
preconditioning step is to simulate just enough information in order
to obtain the representation (\ref{EQ_L_ZERO}) while satisfying
requirements R1') and R2'). 

\subsubsection{Generating $\protect\varepsilon $-strong approximation
  for the driving Brownian motion.}
\label{SEC-STR-APPROX} 
Here, we first provide a brief description of the $\varepsilon$-strong
algorithm of \cite{MR2995793} that simulates a piecewise linear
approximation to 1-dimensional standard Brownian motion $B(\cdot).$
The algorithm iteratively generates a sequence of pairs of piecewise
constant dominating processes, $\{B^{\uparrow}_{n}(t):t \in[0,1]\}$
and $\{B^{\downarrow}_{n}(t):t \in[0,1]\},$
that satisfy the following properties: For all $t \in[0,1],$
\begin{equation*}
B^{\downarrow}_{n}(t) \leq B^{\downarrow}_{n+1}(t) \leq B(u) \leq
B^{\uparrow }_{n+1}(t) \leq B^{\uparrow}_{n}(t), \text{ and }
\end{equation*}
\begin{equation*}
\sup_{t \in[0,1]}|B^{\uparrow}_{n}(t) - B^{\downarrow}_{n}(t)| \searrow0, 
\text{ a.s. as } n \nearrow\infty.
\end{equation*}

At every step $n \geq1,$ the algorithm generates information about the
Brownian motion $B(\cdot)$ in dyadic intervals $\{((j-1)2^{-n},
j2^{-n}]:j=1,\ldots,2^{n}\}$ conditional on the information available
on dyadic intervals from the $(n-1)^{th}$ step. Let $m_{j,n}$ and
$M_{j,n}$ denote the extrema of $B(\cdot):$
\begin{equation*}
m_{j,n} := \inf\{B(t): t \in((j-1)2^{-n}, j2^{-n}]\} \text{ and } M_{j,n} =
\sup\{B(t): t \in((j-1)2^{-n}, j2^{-n}]\}.
\end{equation*}
During $n^{th}$ iteration, the $\varepsilon$-strong algorithm simulates the
following random quantities for each dyadic interval (indexed by $%
j=1,\ldots,2^{n}$):

\begin{enumerate}
\item[1)] an interval that contains the minimum: $L_{j,n}^{\downarrow
  }$ and $ L_{j,n}^{\uparrow }$ such that $m_{j,n}\in \lbrack
  L_{j,n}^{\downarrow },L_{j,n}^{\uparrow }]$ and $L_{j,n}^{\uparrow
  }-L_{j,n}^{\downarrow }<2^{-(n+1)/2},$

\item[2)] an interval that contains the maximum: $U_{j,n}^{\downarrow
  }$ and $ U_{j,n}^{\uparrow }$ such that $M_{j,n}\in \lbrack
  U_{j,n}^{\downarrow },U_{j,n}^{\uparrow }]$ and $U_{j,n}^{\uparrow
  }-U_{j,n}^{\downarrow }<2^{-(n+1)/2},$ and

\item[3)] the end-points of Brownian motion: $B((j-1)2^{-n})$ and
  $B(j2^{-n}).$
\end{enumerate}

Let $\mathcal{I}_{j,n}$ denote the collective information, 
\begin{equation*}
\mathcal{I}_{j,n} := \{L^{\downarrow}_{j,n}, L^{\uparrow}_{j,n},
U^{\downarrow}_{j,n}, U^{\uparrow}_{j,n}, B((j-1)2^{-n}), B(j2^{-n})\},
\end{equation*}
which is referred to as \textit{intersection layer} in \cite{MR2995793}. Let 
$\mathcal{I}$ denote the collection of all the intersection layers; at the
end of $n^{th}$ iteration, the collection $\mathcal{I}$ is updated as below: 
\begin{equation*}
\mathcal{I} := \{\mathcal{I}_{j,n}: j=1,\ldots,2^{n}\}.
\end{equation*}
The $(n+1)^{th}$ step makes use of $\mathcal{I}$ generated in the $n^{th}$
step to arrive at $\{\mathcal{I}_{j,n+1}:j=1,\ldots,2^{n+1}\}.$ Specific
details of how these random quantities are simulated can be found in \cite%
{MR2995793}. From the intersection layers $\mathcal{I}_{j,n}$ generated by
the algorithm at $n^{th}$ step, the dominating piecewise constant processes
can be formed as below: 
\begin{align*}
B^{\uparrow}_{n}(t) & = \sum_{j=1}^{2^{n}} U^{\uparrow}_{j,n}\mathbf{1}(t
\in((j-1)2^{-n}, j2^{-n}]), \text{ and } \\
B^{\downarrow}_{n}(t) & = \sum_{j=1}^{2^{n}} L^{\downarrow}_{j,n}\mathbf{1}%
(t \in((j-1)2^{-n}, j2^{-n}]).
\end{align*}
Further define the following piecewise linear process which shall serve as
our approximation for $B(\cdot):$ 
\begin{equation}
B_{n}(t) = \sum_{j=1}^{2^{n}}\left[ B((j-1)2^{-n}) +
2^{n}(B(j2^{-n})-B((j-1)2^{-n}))(t-(j-1)2^{-n}) \right] ,  \label{BM-APPROX}
\end{equation}
which is just a linear interpolation of the points
$\{B((j-1)2^{-n}):j=1,%
\ldots,2^{n}\}$
over the dyadic intervals in $[0,1].$ Note that all the random
variables used in the construction of
$B^{\uparrow}_{n}(\cdot), B^{\downarrow}_{n}(\cdot)$ and
$B_{n}(\cdot)$ are available in $\mathcal{I},$ and can be simulated on
a personal computer without any discretisation error. It is proved in \cite{MR2995793} that the dominating processes $%
B^{\uparrow }_{n}(\cdot)$ and $B^{\downarrow}_{n}(\cdot)$ have the following
convergence behavior: 
\begin{align}
\varepsilon_{n} := \sup_{t \in[0,1]}|B^{\uparrow}_{n}(t) -
B^{\downarrow}_{n}(t)| & = \max_{1 \leq j
\leq2^{n}}|U^{\uparrow}_{j,n}-L^{\downarrow }_{j,n}| \searrow0, \text{ and }
\label{BM-ERR} \\
\E \left[ \int_{0}^{1}|B^{\uparrow}_{n}(t) - B^{\downarrow
  }_{n}(t)|{d} t\right] & = O(2^{-n/2}) , \quad \text{ as } n
                          \rightarrow \infty.\notag
\end{align}

To generate a piecewise linear approximation of the $d$-dimensional
Brownian motion
$\mathbf{B}(\cdot) = (B_{1}(\cdot),\ldots,B_{d}(\cdot)),$ we generate
approximating processes
$B_{n,i}(\cdot),B^{\uparrow}_{n,i}(\cdot), \text{ and }
B^{\downarrow}_{n,i}(\cdot)$
independently for each 1-dimensional Brownian motion $B_{i}(\cdot)$ as
explained above, and use
\begin{equation*}
\mathbf{B}_{n}(t) = (B_{n,1}(t),\ldots,B_{n,d}(t)), \quad t \in[0,1]
\end{equation*}
as piecewise linear approximation for $\mathbf{B}(\cdot).$ Similar to
the 1-dimensional case, the simulated information is stored in the
intersection layers
$\mathcal{I} = \{ \mathcal{I}_{j,n}^i: j = 1,\ldots,2^n, i = 1,\ldots,
d\};$
here, $\mathcal{I}_{j,n}^i$ simply denotes the intersection layer
simulated to generate the approximation to $i^{th}$ component $B_i(t)$
of the driving Brownian motion $\mathbf{B}(\cdot).$ As in the
1-dimensional case, we use $\varepsilon_n$ to denote the error in
approximation at the $n^{th}$ step:
\begin{align}
  \varepsilon_n := \sup \left\{ \left \vert B^\uparrow_{n,i}(t) -
  B^\downarrow_{n,i}(t)\right\vert : t \in [0,1], i = 1,\ldots,d \right\}. 
  \label{error-d-dim}
\end{align}

\subsubsection{Generating $\varepsilon-$strong approximation of RBM.}  
\label{Sec-Str-Approx-RBM}
Given a linear path $(\mathbf{x}(t): t \in [t_0,t_1))$ specified by
initial condition $\mathbf{x}(t_0) \in \mathbb{R}^d$ and
$\dot{\mathbf{x}}(t) = \mathbf{m} \in \mathbb{R}^d$ for
$t \in (t_0,t_1),$ we explain in this section how to identify the
reflected path $\mathbf{y}(\cdot) = S(\mathbf{x}(\cdot))$ that solves
the Skorokhod problem in (\ref{SKO-PROB}) with
$(\mathbf{x}(t): t \in [t_0,t_1))$ as the path of the driving (free)
process. Once we know how to solve (\ref{SKO-PROB}) for a linear path
in the interval $(t_0,t_1),$ it is straightforward to iteratively
solve for any piecewise linear path of the driving (free) process.

As the slope of each component of the driving path $\mathbf{x}(\cdot)$
is fixed in the interval $[t_0,t_{1}),$ the slope of $\mathbf{y}$ at
time $t \in (t_0,t_{1}),$ denoted by $\mathbf{\dot{y}}(t),$ is
obtained by,
\begin{align*}
  \mathbf{\dot{y}}(t) = \mathbf{m} + R\mathbf{z}, 
\end{align*}
where, as per conditions 1)$-$3) in Skorokhod problem (\ref{SKO-PROB}),
$\mathbf{y}(t)$ and $\mathbf{z} \in \mathbf{R}^d$ should satisfy,
\begin{align*}
  \mathbf{y}(t) \geq \mathbf{0}, \quad \mathbf{z} \geq \mathbf{0},
  \quad \text{ and } 
  \mathbf{y}(t) \mathbf{z} = \mathbf{0}. 
\end{align*}
As a component $z_i$ of $\mathbf{z} = (z_1, \ldots, z_d)$ is nonzero
only when the respective component $y_i(t)$ of $\mathbf{y}(t)$ is
zero, it is useful to dynamically keep track of which of the
components of $\mathbf{y}(t) = (y_1(t),\ldots,y_d(t))$ are zero. We
accomplish this algorithmically by letting $C := \{ i : y_i(t) = 0\}.$
In addition, we use $R_C$ to denote the submatrix of $R$ formed by
letting $R_C = [R_{i,j}]_{i,j \in C};$ similarly, let
$\mathbf{z}_{_C} = [z_i]_{i \in C}, \mathbf{\dot{y}}_{_C}(t) =
[\dot{y}_i(t)]_{i \in C}$
denote the vectors formed by entries restricted to indices from the
set $C.$ As $\mathbf{z}_{_C}$ is the minimal non-negative vector that
maintains $\mathbf{y}_{_C}(t) =0$ (see \cite{Kella_Whitt}), it is
obtained by solving a linear program as in Algorithm
\ref{Alg-Str-Approx-RBM} when any of the components of $\mathbf{y}$
hit zero. %(that is, the set $C$ is
%nonempty).

\begin{algorithm}[h]{\small \caption{Algorithm to solve Skorokhod
      problem (\ref{SKO-PROB}) for a linear path
      $(\mathbf{x}(t): t\in [t_0,t_1))$ with component wise constant
      slopes $\mathbf{m}$ in the interval $[t_0,t_1)$ for given
      initial condition $\mathbf{y}(t_0)$} \begin{algorithmic}
      \\
      \Procedure{ApproxRBM}{$t_0,t_1,\mathbf{y}(t_0), \mathbf{m}$} \State
      Initialize the time data structure $\vec{t} = [t_0]$ and path
      space data structure $\vec{\mathbf{y}} = [\mathbf{y}(t_0)]$
      \State Initialize
      $\mathbf{y} = \mathbf{y}(t_0), C = \{i: y_i = 0\}, t' = t_0,
      \mathbf{z} = 0.$
      \While{$t' < t_1$} \State Solve the linear program
      $\min_{\mathbf{z}_{_C},
        \mathbf{\dot{y}}_{_C}} \mathbf{1}^T\mathbf{z},$\\
      \hspace{40pt} subject to
      $\mathbf{z}_{_C} \geq \mathbf{0}, \mathbf{\dot{y}}_{_C} =
      \mathbf{m}_{_C} + R_C\mathbf{z}_{_C}, \mathbf{\dot{y}}_C \geq
      \mathbf{0}.$
      % \State Update $\mathbf{\dot{y}} = \mathbf{m} + R\mathbf{z}$
      % and $t' = t_0 + \min_{i: \dot{y}_i < 0} y_i/\dot{y}_i^-.$ 
      % \State Update $\mathbf{y} = \mathbf{y} + \mathbf{\dot{y}}(t' -
      % t_0)$ and $C = \{ i: y_i = 0\}.$
      \State Update \vspace{-16pt}
      \begin{align*}
        \mathbf{\dot{y}} &= \mathbf{m} + R\mathbf{z},\\
        t' &= \min\left\{t_1,  \min_{i: \dot{y}_i < 0}
             \left(t_0 - \frac{y_i}{\dot{y}_i} \right)\right\}, \hspace{110pt}\\
        \mathbf{y} &= \mathbf{y} + \mathbf{\dot{y}}(t' - t_0) \text{
                     and }\\
        C &= \{ i: y_i = 0\}.
      \end{align*}
      \State Append the time and path space data structures with the
      latest entries as in 
      \begin{align*}
        \vec{t}
     \leftarrow [\vec{t} \quad t'] \quad \text{ and } \quad  \vec{\mathbf{y}} \leftarrow
      [\vec{\mathbf{y}} \quad \mathbf{y}].
      \end{align*}
     %  $\vec{t}
     % = [\vec{t} \quad t']$ and $\vec{\mathbf{y}} =
     %  [\vec{\mathbf{y}} \quad \mathbf{y}].$ 
\EndWhile
\vspace{-20pt}
\State \textbf{end while}
\State \textbf{Return}  $\vec{t}, \vec{\mathbf{y}}.$ 
\EndProcedure
\end{algorithmic}
\label{Alg-Str-Approx-RBM} }
\end{algorithm}

Given component-wise constant slopes $\mathbf{m}$ of the driving
process $\mathbf{x}(\cdot)$ in the interval $(t_0, t_{1})$ and the
initial condition $\mathbf{y}(t_0) = \mathbf{y},$ Algorithm
\ref{Alg-Str-Approx-RBM} returns a vector $\vec{t}$ and a matrix
$\vec{y}$ of dimension $d \times \text{size}(\vec{t}),$ where
$\text{size}(\vec{t})$ is the dimension of $\vec{t}.$ The data
structures $\vec{t}$ and $\vec{\mathbf{y}}$ returned by Algorithm
\ref{Alg-Str-Approx-RBM} can be used to construct the piecewise linear
path $(\mathbf{y}(t): t \in [t_0,t_1))$ as follows: if the $i^{th}$
entry of $\vec{t}$ is $t_i$ and the $i^{th}$ column of the matrix
$\vec{y}$ is $\mathbf{y}_i,$ then linear interpolation of the points
$(t_i, \mathbf{y}_i)$ yields us the reflected path
$(\mathbf{y}(t): t \in [t_0,t_1)).$

\subsubsection{Detecting the interval $(\Tl, \Tr].$} Recall that the
objective of the preconditioning step is to simulate just enough
information about the RBM so that we arrive at the representation
(\ref{EQ_L_ZERO}). As explained in Section \ref{Sec-RAR-RBM}, we
propose to achieve this by detecting an interval
$(T_{{left}},T_{{right}}]$ containing $T$ such that the RBM stays in
the interior of positive orthant without hitting the reflecting
boundary anywhere in that interval. % Recall that in
% order to arrive at the representation (\ref{EQ_L_ZERO}), we proposed
% to identify an interval $(\Tl,\Tr]$ containing $T$ such that the RBM
% $\mathbf{Y}(\cdot)$ does not hit the reflecting boundary anywhere in
% $(\Tl, \Tr].$
In order to accomplish this, we first make the following
observations:
\begin{enumerate}
\item[A)] % Recall that
  % $S:C([0,1]; \mathbb{R}^d) \rightarrow C([0,1];\mathbb{R}^d)$ denotes
  % the Skorokhod reflection map given by \eqref{SKO-PROB}.
  It is well-known that the Skorokhod map $S$ is lipschitz continuous
  (with respect to the uniform metric on the path space
  $C([0,1];\mathbb{R}^d)$) with Lipschitz constant
  $K_1 := (1-\alpha)^{-1};$ here $\alpha \in (0,1)$ denotes the
  spectral radius of the matrix $Q = I-R.$ (see \cite{MR606992}).
  % Since $S$ is Lipschitz continuous with respect to the uniform
  % topology on $C[0,1]$ (with Lipschitz constant
  % $K_1 :=1/(1-\alpha )$%
  % ),
  Therefore, if we solve \eqref{SKO-PROB} with
  $\mathbf{X}(\cdot )=\mathbf{B}_{n}(\cdot )$ as the driving (free)
  process, the corresponding reflected process
  $\mathbf{Y}_n(\cdot) := S(\mathbf{B}_{n})(\cdot)$ satisfies,
\begin{equation}
  \Vert \mathbf{Y}_n - \mathbf{Y} \Vert :=  \sup_{t\in  [0,1], \  i = 1,\ldots,d}|Y_{n,i}(t) - Y_i(t)| <
  K_1\varepsilon _{n},
  \label{Lipsch-Conseq}
\end{equation}
where
$\varepsilon_n := \sup
\{B^{\uparrow}_{n,i}(t)-B^{\downarrow}_{n,i}(t):t \in [0,1], i =
1,\ldots,d\},$
and $B^{\uparrow}_{n,i}(\cdot), B^{\downarrow}_{n,i}(\cdot)$ are,
respectively, the simulated upper and lower bounding processes of the
$i^{th}$ component of the driving Brownian motion
$\mathbf{B}(\cdot).$ Since
\begin{align*}
  \Vert \mathbf{B}_n(t) - \mathbf{B}(t) \Vert =
  \sup\{\vert B_{n,i}(t) - B_i(t) \vert: t \in [0,1], i = 1,\ldots,d\}
  \leq \varepsilon_n,
\end{align*}
% $\mathbf{B}(\cdot).$
% Since $\Vert \mathbf{B}_n(t) - \mathbf{B}(t) \Vert$ =
% $\sup\{\vert B_{n,i}(t) - B_i(t) \vert: t \in [0,1], i = 1,\ldots,d\}
% \leq \varepsilon_n,$
(\ref{Lipsch-Conseq}) follows as a simple consequence of Lipschitz
continuity of Skorokhod map. 

\item[B)] Let us say $a:= \min_{i=1,\ldots,d}Y_i(T) > 0;$ that is, the
  RBM $\mathbf{Y}(\cdot)$ lies in the interior of positive orthant at
  time $T.$ Then, due to continuity of Brownian motion paths, there
  exists a random interval $(T - t', T + t')$ where
  $\vert B_i(t) - B_i(T) \vert < a$ for all $t \in (T - t', T + t'),$
  and consequently,
  \begin{align*}
    &\mathbf{Y}(t) -  \mathbf{Y}(T)  = \mathbf{B}(t) - \mathbf{B}(T)
    \text{ and }\\
    &\mathbf{Y}(t) = \mathbf{Y}(T) + (\mathbf{B}(t) - \mathbf{B}(T)) >
  a\mathbf{1} -a\mathbf{1} =\mathbf{0},
  \end{align*}
  for all $t \in (T - t', T + t').$ In other words, the RBM
  $\mathbf{Y}(t)$ lies in the interior of positive orthant for every
  $ t \in (T-t', T+t')$ where $\vert B_i(t) - B_i(T) \vert < a.$ In
  order to make use of this observation, recall that our constructed
  piecewise constant upper and lower bounding processes
  $B_{n,i}^\uparrow$ and $B_{n,i}^{\downarrow}$ satisfy
  $B_{n,i}^{\uparrow }(\cdot)-B_{n,i}^{\downarrow }(\cdot)
  <\varepsilon_{n}$ for each component $i,$ and consequently,
\begin{equation*}
  |B_i(t)-B_i(s)|<\varepsilon _{n},\quad t,s\in \left[
    (j-1)2^{-n},j2^{-n}, 
  \right],  
\end{equation*}
for all $i = 1,\ldots,d,$ and $j = 1,\ldots,2^n.$ As a result, if
$\min_{i = 1,\ldots,d} Y_i(T)>\varepsilon _{n},$ then as
$|B_i(t) - B_i(T)| < \varepsilon_n$ for every $t$ in the dyadic
interval containing $T,$ the RBM $\mathbf{Y}(\cdot )$ does not hit the
reflecting boundary anywhere in the specific dyadic interval
$\left[(j-1)2^{-n}, \ j2^{-n}\right]$ containing
$T.$% and it moves just like Brownian motion
% $\mathbf{B}(\cdot)$ locally in that interval.
\end{enumerate}

If we know that $\varepsilon_n$ is small enough such that every
component of the RBM satisfies $Y_i(T) \geq \varepsilon _{n},$ then
from Observation B) noted above, the RBM $\mathbf{Y}(t)$ stays in the
interior of positive orthant for every $t$ in the dyadic interval
$((j-1)2^{-n},j2^{-n}]$ containing $T;$ consecutively, we can declare
the corresponding interval $((j-1)2^{-n},j2^{-n}]$ as $(\Tl, \Tr].$
However, since we do not know $\mathbf{Y}(T),$ the immediate objective
is to figure out how to guarantee that $Y_i(T)$ is indeed larger than
$\varepsilon _{n},$ for every $i = 1,\ldots,d.$ From the Lipschitz
continuity in observation A), if $\varepsilon_n$ is small enough so
that $Y_{n,i}(T) = S(\mathbf{B}_{n})(T)>(K_1+1)\varepsilon _{n}$ for
some $n,$ then $Y_i(T)>\varepsilon _{n}.$ Since $\mathbf{Y}(t)$ lies
in the interior of the positive orthant almost everywhere, we will
indeed have that $Y_{n,i}(T)>(K_1+1)\varepsilon _{n}$ for suitably
small approximation error $\varepsilon_n.$ Now define,
\begin{equation*}
  N:=\inf \{n\geq 1: \ Y_{n,i}(T) > (K_1+1)\varepsilon _{n}, i = 1,\ldots,d\},\text{ and }\delta
  :=\varepsilon_{_N}.
\end{equation*}
Recall that
$\varepsilon _{n}:=\sup\{|B_{n,i}^{\uparrow }(t)-B_{n,i}^{\downarrow
}(t)| : t \in [0,1], i = 1,\ldots, d\}.$
The preconditioning procedure for detecting the interval
$(T_{{left} },T_{{right}}]$ that simulates approximations to Brownian
motion and RBM until the stopping time $N$ is summarized in Algorithm 
\ref{APPROX-RBM}.

\begin{algorithm}[h]{\small \caption{Preconditioning step to arrive at
      the representation (\ref{EQ_L_ZERO}).\newline (achieved mainly
      via $\varepsilon-$strong approximation of the underlying
      stochastic processes)} \begin{algorithmic}
      \\
      \Procedure{Preconditioning}{$T$} 
      \State Initialize $\mathcal{I} = \emptyset, n=0, \delta = 1,
      \mathbf{Y}_0(t) = 0, t \in [0,1]$
      \While{$n = 0$ \ \  {\tt OR} \ \  ${Y}_{n,i}(T) < (K_1+1) \delta$ for some $i \in \{1,\ldots,d\}$} 
      \State Increment $n \longleftarrow n+1$ 
      \State Simulate the intersection layers
      $\Ijn^i$ for $j=1,\ldots,2^n, i =1,\ldots,d$ and the piecewise
      constant upper and lower bounding 
      processes $B_{n,i}^\uparrow(\cdot), B_{n,i}^\downarrow(\cdot),$
      conditional on $\mathcal{I}$  
     \State Form $\mathbf{B}_n(\cdot) = (B_{n,1},\ldots,B_{n,d})$ as
     in \eqref{BM-APPROX}, which serves as a  
      piecewise linear approximation to $\mathbf{B}(\cdot)$ 
     \State Update \vspace{-16pt}
     \begin{align*}
       \delta &= \sup\{ \vert B_{n,i}^\uparrow(t) -
                B_{n,i}^\downarrow(t)\vert: t \in [0,1], i =
                1,\ldots,d\} \hspace{35pt}\\ 
       \mathcal{I} &= \{\Ijn:j=1,\ldots,2^n\}
     \end{align*}
% $\delta = \sup\{ \vert B_{n,i}^\uparrow(t),
%      B_{n,i}^\downarrow(t)\vert: t \in [0,1], i = 1,\ldots,d\}$ and
%       $\mathcal{I} = \{\Ijn:j=1,\ldots,2^n\}$ 
    \State Letting  $\mathbf{X}(\cdot)=\mathbf{B}_n(\cdot)$ in \eqref{SKO-PROB}, solve for the
      reflected process $\mathbf{Y}(\cdot)$ using Algorithm
      \ref{Alg-Str-Approx-RBM}; call the solution as
      $\mathbf{Y}_n(\cdot)$ 
\EndWhile
\vspace{-5pt}
\State \textbf{end while}
\State Find $J = \{1 \leq j \leq 2^n: (j-1)2^{-n} < T \leq
j2^{-n}\}$
\State Set $N = n, \Tl = (J-1)2^{-n}$ and $\Tr = J2^{-n}$
\State \textbf{Return} $\mathcal{I}, N, \Tl, \Tr, \delta$ and
$\mathbf{Y}^\delta(\cdot) := \mathbf{Y}_n(\cdot)$
\EndProcedure
\end{algorithmic}
\label{APPROX-RBM} }
\end{algorithm}

% Similarly for $d$-dimensional Brownian motion $\mathbf{B}(\cdot) =
% (B_{1}(\cdot),\ldots,B_{d}(\cdot)),$ generate approximating processes $%
% B_{i,n}(\cdot),B^{\uparrow}_{i,n}(\cdot), \text{ and } B^{\downarrow}_{i,n}(%
% \cdot)$ independently for each 1-dimensional Brownian motion $B_{i}(\cdot)$
% and use 
% \begin{equation*}
% \mathbf{B}_{n}(t) = (B_{1,n}(t),\ldots,B_{d,n}(t)) \text{ and } \mathbf{\ Y}%
% _{n}(t) = S(\mathbf{B}_{n})(t), t \in[0,1]
% \end{equation*}
% as the piecewise linear approximations for $\mathbf{B}(\cdot)$ and $\mathbf{%
% Y(\cdot)}$ respectively. Setting $\varepsilon_{n} = \sup\{B^{\uparrow
% }_{i,n}(t)-B^{\downarrow}_{i,n}(t): t \in[0,1], i=1,\ldots,d\},$ we find $%
% N,T_{{left}},T_{{right}}$ and $\delta$ as in Algorithm \ref{APPROX-RBM}. % The
% % next step is to suitably propose for $\mathbf{Y}(T)$ based on the simulated
% % information.\newline

With this construction, since the RBM $\mathbf{Y}(\cdot)$ does not hit
the reflecting boundary anywhere in the interval $(T_{{left}},T_{{%
    right}}],$
the dynamics of $\mathbf{B}(\cdot)$ and $\mathbf{Y}(\cdot)$ match in
$(T_{{left}},T_{{right}}];$ in particular, 
\begin{equation}
\mathbf{Y}(t) - \mathbf{Y}(T_{{left}}) = \mathbf{B}(t) - \mathbf{B}(T_{{left}%
}), \text{ for all } t \in(T_{{left}},T_{{right}}],
\label{BM-RBM-MATCHING-DYNAMICS}
\end{equation}
thus resulting in the desired additive
representation 
%\begin{equation*} 
$\mathbf{Y}(T) = \mathbf{Y}(T_{{left}}) + \mathbf{\Delta},$ 
%\end{equation*}
in (\ref{EQ_L_ZERO}), where the increment
$\mathbf{\Delta} := \mathbf{B}(T)-\mathbf{B}(T_{{left}})$ is simply
the Brownian increment.

\subsection{The conditional probability density of $\Delta$ for the
  refine  until accept / reject sampler} 
\label{Sec-Cond-Dens-Delta}
The requirements R1') and R2'), listed in Section \ref{Sec-RAR-RBM}
for the implementation of the refine until accept / reject exact
sampler, necessitate us to know the law of $\mathbf{\Delta}$
conditional on all the simulated collection of random variables
$\mathcal{I}.$ For ease of exposition, we consider the 1-dimensional
case; the conditional probability density of the Brownian increment
$\mathbf{\Delta}$ in $d$-dimensions, denoted by
$f_{\mathbf{\Delta}}(\cdot),$ is given simply by the product form of
1-dimensional densities.

% \subsubsection{Probability Density of $\Delta$ conditional on
%   simulated information.}
At any stage of algorithm, all the information simulated about the
driving Brownian motion are available in the intersection layers
$\mathcal{I} = \{\mathcal{I}_{j,n}: j = 1, \ldots, 2^n\}.$ From
Algorithm \ref{APPROX-RBM}, recall that $J$ is the index corresponding
to the dyadic interval $(T_{{left}},T_{{right}}]$ that contains $T;$
that is $(J-1)2^{-N}=T_{{%
    left}}<T\leq T_{{right}}=J2^{-N}.$ For ease of notation, let
\begin{align*}
  L^{\downarrow }:=L_{J,N}^{\downarrow }-B(T_{{left}}),\hspace{3pt}&
  L^{\uparrow }:=L_{J,N}^{\uparrow }-B(T_{{left}}), \\
  U^{\downarrow }:=U_{J,N}^{\downarrow }-B(T_{{left}}),\hspace{3pt}&
  U^{\uparrow }:=U_{J,N}^{\uparrow }-B(T_{{left}}), \\
  l:=T_{{right}}-T_{{left}},\hspace{3pt}s:=T-T_{{left}}& \text{ and
  }v:=B(T_{{%
      right}})-B(T_{{left}}).
\end{align*}%
Further let $W(\cdot )$ denote an independent standard Brownian motion on $%
C[0,1]$ under measure $\Pr \left( \cdot \right) .$ Then due to Markov
property of $B(\cdot ),$ the increment $\Delta $ conditional on $\mathcal{I}$
has the following density: 
\begin{equation}
f_{\Delta }(x)dx=\Pr \left\{ W(s)\in {d}x\left. \frac{{}}{{}}\right\vert
W(l)=v,\inf_{0\leq t\leq l}W(t)\in (L^{\downarrow },L^{\uparrow
}),\sup_{0\leq t\leq l}W(t)\in (U^{\downarrow },U^{\uparrow })\right\} .
\end{equation}%
Note that the support of $f_{\Delta }(\cdot )$ is
$(L^{\downarrow },U^{\uparrow }).$ A closed form expression for
$f_{\Delta }(\cdot )$ follows from Proposition 5.1 of
\cite{MR2995793}, and is given here:
\begin{equation*}
f_{\Delta }(x)\propto \rho (x)\times \pi (x),
\end{equation*}%
where for any fixed $L^{\downarrow },L^{\uparrow },U^{\downarrow
},U^{\uparrow },v,s\text{ and }l,$ 
\begin{align}
\pi (x)& :=\exp \left( -\frac{1}{2}\left( x-\frac{s}{l}v\right) ^{2}\left. 
\frac{{}}{{}}\right/ \left( \frac{s(l-s)}{l}\right) \right) ,\text{ and }
\label{PI-EXP} \\
\rho (x)& :=\Pr \left\{ \inf_{0\leq t\leq l}W(t)\in (L^{\downarrow
},L^{\uparrow }),\sup_{0\leq t\leq l}W(t)\in (U^{\downarrow },U^{\uparrow
})\left. \frac{{}}{{}}\right\vert W(s)=x,W(l)=v\right\}   \notag \\
& =\gamma _{1}(x)\gamma _{2}(x)-\gamma _{3}(x)\gamma _{4}(x)-\gamma
_{5}(x)\gamma _{6}(x)+\gamma _{7}(x)\gamma _{8}(x).  \label{RHO-GAMMA-EXP}
\end{align}%
To define $\gamma _{1},\ldots ,\gamma _{8},$ first consider the
probability that the Brownian bridge from $a$ to $b$ in the time
interval $%
[0,r]$ stays within $(L,U):$
\begin{align}
\gamma (L,U;r,a,b)& :=\Pr \left\{ L<\inf_{0\leq t\leq l}W(t)\leq \sup_{0\leq
t\leq l}W(t)<U\left. \frac{{}}{{}}\right\vert W(0)=a,W(r)=b\right\}   \notag
\\
& =\left( 1-\sum_{j=1}^{\infty }(\sigma _{j}-\tau _{j})\right) \mathbf{1}%
(a,b\in (L,U)),  \label{EXIT-PROB-EXPRESSION}
\end{align}%
\begin{align*}
\text{where, }\sigma _{j}& :=\exp \left( -\frac{2}{r}%
((U-L)j+L-a)((U-L)j+L-b)\right)  \\
& \hspace{50pt}+\exp \left( -\frac{2}{r}((U-L)j-U+a)((U-L)j-U+b)\right) ,%
\text{ and } \\
\tau _{j}& :=\exp \left( -\frac{2(U-L)j}{r}((U-L)j+a-b)\right) +\exp \left( -%
\frac{2(U-L)j}{r}((U-L)j+b-a)\right) .
\end{align*}%
The expression \eqref{EXIT-PROB-EXPRESSION} for $\gamma (L,U;l,a,b)$ is
originally from \cite{MR1816120}. Now we are ready to define $\gamma
_{1},\ldots ,\gamma _{8}$ mentioned in \eqref{RHO-GAMMA-EXP}: 
\begin{align*}
\gamma _{1}(x)& =\gamma (L^{\downarrow },U^{\uparrow };s,0,x),\hspace{17pt}%
\gamma _{2}(x)=\gamma (L^{\downarrow },U^{\uparrow };l-s,x,v),\hspace{15pt}%
\gamma _{3}(x)=\gamma (L^{\uparrow },U^{\uparrow };s,0,x), \\
\gamma _{4}(x)& =\gamma (L^{\uparrow },U^{\uparrow };l-s,x,v),\hspace{1pt}%
\gamma _{5}(x)=\gamma (L^{\downarrow },U^{\downarrow };s,0,x),\hspace{31pt}%
\gamma _{6}(x)=\gamma (L^{\downarrow },U^{\downarrow };l-s,x,v), \\
\gamma _{7}(x)& =\gamma (L^{\uparrow },U^{\downarrow };s,0,x),\hspace{17pt}%
\gamma _{8}(x)=\gamma (L^{\uparrow },U^{\downarrow };l-s,x,v). 
\end{align*}
To perform acceptance / rejection type-sampling, we need that the
conditional density $f_{\Delta}$ (of the Brownian increment) is
Lipschitz continuous (see Requirement R1') in Section
\ref{Sec-RAR-RBM}. Lemma \ref{LIPSH-CONT} is a step towards
establishing this fact.
\begin{lemma}
There exists positive constants $c_{\pi},K_{\pi},c_{\rho}$ and $K_{\rho}$
such that for any fixed $L^{\downarrow},L^{\uparrow},U^{\downarrow},U^{%
\uparrow},$ $v,s$ and $l,$ 
\begin{align*}
\pi(x) < c_{\pi} & ,\hspace{10pt} |\pi(x)-\pi(y)| < K_{\pi}|x-y| \text{ and }
\\
\rho(x) < c_{\rho} & ,\hspace{10pt} |\rho(x)-\rho(y)| < K_{\rho}|x-y|,
\end{align*}
for all $x,y \in(L^{\downarrow},U^{\uparrow}).$ 
\label{LIPSH-CONT}
\end{lemma}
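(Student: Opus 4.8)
The plan is to handle the explicit Gaussian factor $\pi$ and the series factor $\rho$ separately, and to regard the parameters $L^\downarrow,L^\uparrow,U^\downarrow,U^\uparrow,v,s,l$ as fixed, so that the constants are permitted to depend on them (a finite bound per realization is all the rejection step in Section~\ref{ACC-REJ-1} needs). For $\pi$, from \eqref{PI-EXP} we have $\pi(x)=\exp(-(x-sv/l)^2/(2\sigma^2))$ with $\sigma^2=s(l-s)/l>0$, so $\pi(x)\leq 1=:c_\pi$ at once. Since $\pi$ is smooth, its Lipschitz constant on the bounded interval $(L^\downarrow,U^\uparrow)$ is, by the mean value theorem, $\sup_x|\pi'(x)|$; writing $u=(x-sv/l)/\sigma$ gives $|\pi'(x)|=\sigma^{-1}|u|e^{-u^2/2}\leq \sigma^{-1}e^{-1/2}=:K_\pi$, using $\sup_u|u|e^{-u^2/2}=e^{-1/2}$.

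For $\rho$, boundedness is immediate: by \eqref{EXIT-PROB-EXPRESSION} each $\gamma_i$ in \eqref{RHO-GAMMA-EXP} is the probability that a Brownian bridge stays in an interval, so $\gamma_i\in[0,1]$ and $|\rho|\leq 4=:c_\rho$. Because a product of bounded Lipschitz functions is again Lipschitz, via
\[
|\gamma_i\gamma_j(x)-\gamma_i\gamma_j(y)|\leq|\gamma_i(x)|\,|\gamma_j(x)-\gamma_j(y)|+|\gamma_j(y)|\,|\gamma_i(x)-\gamma_i(y)|,
\]
it suffices to show that each of the eight factors $\gamma_i$ is Lipschitz on $(L^\downarrow,U^\uparrow)$; the four products in \eqref{RHO-GAMMA-EXP} then assemble into the bound $K_\rho$.

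To show each $\gamma_i$ is Lipschitz I would differentiate the series in \eqref{EXIT-PROB-EXPRESSION} term by term. In every $\gamma_i$ exactly one bridge endpoint equals $x$ and the other is constant ($0$ or $v$), so in each $\sigma_j$- and $\tau_j$-term one factor of the product is constant in $x$ and the exponent is therefore \emph{affine} in $x$. Hence a generic term has the form $\exp(\alpha_j+\beta_j x)$ with $|\beta_j|=O(j)$, while, since the relevant interval has positive width $U-L>0$ and $r\in\{s,l-s\}>0$, the exponent obeys $\alpha_j+\beta_j x\leq -cj^2$ for some $c>0$ and all large $j$, uniformly for $x$ in the bounded domain. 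The differentiated series is thus dominated by a summable sequence $M_j=O(je^{-cj^2})$; by the Weierstrass $M$-test it converges uniformly, term-by-term differentiation is legitimate, and the series defines a $C^1$ function of $x$ whose derivative is bounded by $\sum_j M_j<\infty$, giving the Lipschitz constant of that factor.

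The one genuinely delicate point, which I expect to be the main obstacle, is the indicator $\mathbf{1}(a,b\in(L,U))$ in \eqref{EXIT-PROB-EXPRESSION}: for $\gamma_3,\dots,\gamma_8$ the interval is a strict sub-interval of $(L^\downarrow,U^\uparrow)$, so as $x$ crosses an endpoint such as $L^\uparrow$ or $U^\downarrow$ the factor switches the series off. To exclude a jump that would break Lipschitz continuity, I would check that the series value itself tends to $0$ as $x$ approaches such a boundary---forced by the probabilistic meaning, since a bridge pinned on the boundary cannot remain strictly inside---so that $\gamma_i$ is continuous there; together with the bounded interior derivative from the previous step, a mere corner remains Lipschitz. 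Degenerate sub-intervals (for instance $L^\uparrow\geq U^\downarrow$) make the indicator vanish identically, giving $\gamma_i\equiv 0$. Feeding the eight bounded Lipschitz factors through the product rule then produces $c_\rho$ and $K_\rho$ and completes the proof.
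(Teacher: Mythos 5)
Your proposal is correct and follows essentially the same route as the paper's appendix proof: boundedness of $\pi$ by $1$ and a derivative bound for $K_\pi$; Lipschitz continuity of each $\gamma_i$ by term-by-term differentiation of the series in \eqref{EXIT-PROB-EXPRESSION}, with dominating terms of order $j\exp(-cj^2)$ (the paper's $K_j$, combined there via the fact that a uniform limit of $K$-Lipschitz functions is $K$-Lipschitz rather than your $M$-test/$C^1$ phrasing); and assembly through the product rule for bounded Lipschitz functions, with your boundary treatment of the indicator matching the paper's observation that $\gamma(L,U;r,a,b)=0$ whenever $a$ or $b$ equals $L$ or $U$. The only differences are in the explicit constants — you take $K_\pi = e^{-1/2}\sqrt{l/(s(l-s))}$ and $c_\rho = 4$ via the probabilistic interpretation $\gamma_i\in[0,1]$, whereas the paper takes $K_\pi = \max\{|Ul-sv|,|Ll-sv|\}/(s(l-s))$ and derives $c_\rho = K_\rho(U^\uparrow - L^\downarrow)$ from the vanishing boundary values together with Lipschitz continuity — and these are immaterial since the lemma only asserts existence.
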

Explicit closed-form expressions for the constants
$K_{\pi },K_{\rho },c_{\pi }$ and $c_{\rho }$ are presented in the
Appendix. 

Following the representation that $Y(T) = Y(\Tl) + \Delta,$ the
conditional density of $Y(T)$ given $Y(\Tl)$ is given by
$f_{\Delta}(w-Y(\Tl)),$ which is supported on
$(Y(\Tl) + L^\downarrow, Y(\Tl) + U^\uparrow) \subset (Y(\Tl)- \delta,
Y(\Tl) + \delta).$ % which is in turn a subset of $(Y^{\delta}(\Tl) -
% 2\delta, Y^{\delta}(\Tl) + 2\delta).$ 
As the unknown $Y(\Tl)$ differs from $Y^\delta(\Tl)$ only by
$\pm K_1 \delta,$ the support of $Y(T)$ conditional on the simulated
information is, in turn, a subset of
\[ \big(Y^{\delta}(\Tl) - (K_1 + 1) \delta, \ Y^{\delta}(\Tl) + (K_1 +
1)\delta \big). \]
Here, recall that $K_1 := (1-\alpha)^{-1}$ is the Lipschitz constant
of the Skorokhod reflection map $S.$ Consecutively, if we propose a
sample $Z$ from the uniform distribution in the interval
$(Y^{\delta}(\Tl) - (K_1 + 1)\delta, \ Y^{\delta}(\Tl) + (K_1 +
1)\delta),$
then the likelihood ratio (or the Radon-Nikodym derivative) between
the true conditional density and proposal density is proportional to
$f_{\Delta}(w-Y(\Tl));$ consecutively, a traditional accept / reject
algorithm would accept the proposed sample $Z$ if
\begin{align*}
  V < L\big(Z; Y(\Tl)\big) := \frac{\pi \big(Z -Y(\Tl) \big) \rho \big(Z -
  Y(\Tl)\big)}{c_\pi c_\rho},  
\end{align*}
for an independent $V \sim \text{Unif}(0,1);$ hereafter, we use
$L(z;y)$ to denote
\begin{align*}
  L(z;y) := \frac{\pi(z-y) \rho(z-y)}{c_\rho c_\phi}.
\end{align*}
However, as we do not know $Y(\Tl)$ exactly, if $L(Z;\ \cdot)$ is
Lipschitz continuous, the fact that $Y(\Tl)$ differs from
$Y^\delta(\Tl)$ by $\pm K_1\delta,$ can be used to implement a refine
until accept / reject sampler discussed in
Section~\ref{Sec-RAR-Intro}.

The Lipschitz continuity of $L(z;y)$ as a function of $y,$ follows as
a simple consequence of Lemma \ref{LIPSH-CONT} established earlier. If
$f$ and $g$ are Lipschitz continuous with Lipschitz constants $K_f$
and $K_g,$ and respective absolute bounds $c_f$ and $c_g,$ then $fg$
is a Lipschitz continuous function with Lipschitz constant at most
$c_fK_g + c_gK_f.$ As a result, the product
$\pi(\cdot) \times \rho(\cdot)$ is Lipschitz continuous as well with
Lipschitz constant $c_\pi K_\rho + c_\rho K_\pi.$ Consequently, the
function $L(z,y)$ is Lipschitz continous, as a function of $y,$ with
Lipschitz constant
\begin{align*}
  K_2 := \frac{K_\pi}{c_\pi} + \frac{K_\rho}{c_\rho}. 
\end{align*}

% from Lemma \ref{Lem-Lip-L} below. % Lemma \ref{Lem-Lip-L} is, in turn, a
% simple consequence of Lemma \ref{LIPSH-CONT}.
% \begin{lemma}
%   For any fixed $z,$ $\vert L(z;y_1) - L(z;y_2) \vert \leq K_2 \vert
%   y_1 - y_2 \vert,$
%   \label{Lem-Lip-L}
% \end{lemma}

Given this Lipschitz continuity of $L,$ one can unambiguously accept
the proposal $Z$ if,
\begin{align*}
  V <  L\left(Z;Y^\delta(\Tl)\right)- K_1 K_2\delta,  
\end{align*}
or reject the proposal $Z$ conclusively if
\begin{align*}
    V > L\left(Z;Y^\delta(\Tl)\right)+ K_1 K_2\delta.  
\end{align*}
However, if $V$ is within
$L\left(Z;Y^\delta(\Tl)\right) \pm K_1 K_2 \delta,$ then we obtain a
more refined approximation $Y^{\varepsilon_n}(\Tl),$ for a suitable
$\varepsilon_n < \delta,$ that is good enough to decide whether to
accept / reject. In particular, if $\varepsilon_n$ is smaller than
%\begin{align*}
$(K_1K_2)^{-1} \left\vert V - L\left(Z;Y^{\varepsilon_n}(\Tl)\right)
\right\vert,$
%\end{align*}
then 
\begin{align}
  V < L\big(Z;Y(\Tl)\big) \text{ if and only if } V <
  L(Z;Y^{\varepsilon_n}(\Tl))- K_1 K_2\varepsilon_n.  
\label{Equiv-comp}
\end{align}
This equivalent comparison is at the heart of the refine until accept
/ reject sampler in Algorithm \ref{REJ-SAMP-ALGO} below. It takes the
intersection layers $\mathcal{I}$ returned by Algorithm
\ref{APPROX-RBM} as input, and generates further refined
approximations $(Y^{\varepsilon_n}(t):t\in \lbrack 0,T_{{left}}])$ of
the RBM, if necessary, in order to perform the equivalent comparisons
in (\ref{Equiv-comp}). % It would be useful to recall that
% $K_1 := 1/(1-\alpha),$ $K_2 := {K_\pi}/{c_\pi} + K_\rho/c_\rho$ and 
% \begin{align*}
%   L(z;y) :=  \frac{\pi(z-y)\rho(z-y)}{c_\pi c_\rho}
% \end{align*}
% before 

\begin{algorithm}[h] {\small \caption{ To accept/reject the proposal
      $Z \sim \text{Unif}(Y^\delta(\Tl) - (K_1 + 1)\delta,
      Y^\delta(\Tl) + (K_1 + 1)\delta).$
      If $V < L\big(Z;Y(\Tl)\big)$, the algorithm returns $Z$;
      otherwise it rejects Z and returns nothing. Recall that
      $L(z;y) := {c_\pi^{-1} c_\rho^{-1}\pi(z-y)\rho(z-y)},$
      $K_1 := 1/(1-\alpha)$ and
      $K_2 := {K_\pi}/{c_\pi} + K_\rho/c_\rho.$}
\begin{algorithmic}
\\
\Procedure{RAR-Sampler}{$\mathcal{I},\Tl,N, \delta, Y^\delta(\Tl),Z$}
\State Initialize $n = N,\ \varepsilon_n = \delta$
\State Draw $V$ uniformly from $[0,1]$
\While{$\varepsilon_n > (K_1K_2)^{-1}\vert V - L\big(Z;Y^{\varepsilon_n}(\Tl)\big)\vert$}
\State Increment $n \longleftarrow n+1$
\State Simulate the intersection layers $\Ijn$ for $j=1,\ldots,2^n$ conditional
on $\mathcal{I}$
\State Form $B_n(\cdot)$ as in \eqref{BM-APPROX}, which serves as
piecewise linear approximation to $B(\cdot)$
\State Set $\varepsilon_n = \max\{ \Uup_{j,n}-\Ldown_{j,n}:
j=1,\ldots,2^n\}$ and  $\mathcal{I} = \{\Ijn:j=1,\ldots,2^n\}$
\State Letting $X(t)=B_n(t)$ for $t \in [0,\Tl]$ in \eqref{SKO-PROB}, solve for
the reflected process $Y(\cdot);$ call the solution as $Y_n(\cdot);$
set the required refined approximation $Y^{\varepsilon_n}(\Tl) = Y_n(\Tl).$  
\EndWhile
\State \textbf{end while}
\If {$V < L\big(Z; Y^{\varepsilon_n}(\Tl)\big) - K_1 K_2\varepsilon_n$} 
\State \textbf{Return} $Z$
\Else
\State \textbf{Return} $\emptyset$
\EndIf
\State \textbf{end if}
\EndProcedure
\end{algorithmic}
\label{REJ-SAMP-ALGO} }
\end{algorithm}

For $d$-dimensional processes, the probability density of the
increment $%
\mathbf{\Delta}$
and proposal density $g(\cdot)$ are both given by product of
1-dimensional densities. This results in a likelihood ratio which is
also of product form, leading to a straightforward generalization of
the refine until accept / reject procedure given in Algorithm
\ref{REJ-SAMP-ALGO}.

\section{A note on computational complexity}
\label{Sec-Complexity}
Our objective in this section is to understand the computational
effort required to execute the refine until accept / reject exact
sampler described in Section \ref{SEC-SAMP-PROC}. For ease of
exposition, we do not keep track of multiplying constants, and instead
adopt the following standard notation to describe the asymptotic
behaviour of functions: For given functions
$f:\mathbb{R}^+ \rightarrow \mathbb{R}^+$ and
$g:\mathbb{R}^+ \rightarrow \mathbb{R}^+,$ we say $f(x) = O(g(x))$ if
there exists $c_1 > 0$ and $x_1$ large enough such that
$f(x) \leq c_1g(x)$ for all $x > x_1$; further, we say
$f(x) = \Theta(g(x))$ if there also exists $c_2 > 0$ and $x_2$ large
enough such that $c_2g(x) \leq f(x) \leq c_1g(x)$ for all $x >
x_2.$% Given $\varepsilon > 0,$ let us use
% $C(\varepsilon)$ to denote the computational cost of obtaining a
% piecewise linear approximation to Brownian motion $\mathbf{B}(\cdot)$
% as described in Section \ref{SEC-STR-APPROX}.

% Recall the definition of error in approximation $\varepsilon_n$ in
% (\ref{error-d-dim}), and to achieve this accuracy we needed to
% simulate relevant information (such as maxima, minima and endpoints of
% the Brownian motion $\mathbf{B}(\cdot)$) in $2^n$ dyadic intervals as
% described in Section \ref{SEC-STR-APPROX}. If we use
% $C(\varepsilon_n)$ to denote the computational cost required to
% achieve the error in approximation as $\varepsilon_n$, then as we need
% to expend constant amount of expected computation in each of the $2^n$
% dyadic intervals, we obtain $C(\varepsilon_n) = \Theta(2^n).$

Recall the definition of error in approximation $\varepsilon_n$ in
(\ref{error-d-dim}), and to achieve this accuracy we needed to
simulate relevant information (such as maxima, minima and endpoints
for $d$ independent 1-dimensional Brownian motions
$\mathbf{B}_i(\cdot)$) in $2^n$ dyadic intervals as described in
Section \ref{SEC-STR-APPROX}. As this entails constant amount of
expected computation for each dimension in each of the $2^n$ dyadic
intervals, the computational cost at the end of $n$ steps of the
iterative procedure described in Section \ref{SEC-STR-APPROX} is
$\Theta(d2^n).$ % As $\varepsilon_n$ is the error in approximation at the end of
% $n$ steps, we denote the above computational cost as $C(\varepsilon_n)
% := \Theta(2^n).$ 

Next, observe that the preconditioning step requires us to iterate
until the stopping time
$N = \inf \{ n \geq 1: Y_{n,i}(t) > (K_1 +1)\varepsilon_n, i =
1,\ldots,d \}.$
As the computational cost of solving the Skorokhod problem (as in
Algorithm \ref{Alg-Str-Approx-RBM}) with a piecewise linear input
$\mathbf{B}_k(\cdot)$ is $\Theta(d^32^k)$ uniformly in $d,$ the total
cost of executing the entire preconditioning step is
$\Theta( d^3\sum_{k = 1}^N 2^k) = \Theta (d^32^N).$ Here, we have used
that $O(d^3)$ computations are needed to solve the linear program in
Algorithm \ref{Alg-Str-Approx-RBM} when the set $C$ contains $O(d)$
elements. Following the same line of reasoning, if we let
\begin{align*}
N' = \inf\{ n \geq N: \varepsilon_n < (K_1K_2)^{-1} \vert V -
  L(\mathbf{Z};\mathbf{Y}^{\varepsilon_n}(\Tl))\vert \},  
\end{align*}
as required in the refine until accept / reject step in Algorithm
\ref{REJ-SAMP-ALGO}, the corresponding computational cost is
$\Theta(d^32^{N'}).$ As $N' \geq N$, the total computational cost of
the sampling procedure is
$\Theta(d^32^N + d^32^{N'}) = \Theta(d^32^{N'}).$ In other words,
there exists positive constants $c_1$ and $c_2$ such that
\begin{align*}
  c_1d^32^{N'} \leq \text{ computational cost } \leq c_2d^32^{N'}. 
\end{align*}
Therefore, the expected computational cost of the entire sampling
procedure is $\Theta(d^3E[2^{N'}]).$

Next, to compute $E[2^{N'}],$ we first use the definition of
$\varepsilon_n$ in (\ref{error-d-dim}) to observe that
\begin{align*}
  P \left( N' > n \right) = P\left( \varepsilon_n >
  \frac{D}{K_1K_2}\right)
  = P\left( \max_{{i=1,\ldots,d}}\sup_{t \in [0,1]} \vert B_{n,i}^\uparrow(t)
    - B_{n,i}^{\downarrow}(t) \vert > \frac{D}{K_1K_2}\right), 
\end{align*}
where we have let
$D = \vert V - L(\mathbf{Z};\mathbf{Y}^{\varepsilon_n}(\Tl)) \vert$
for notational convenience. For each fixed $i,$ it follows from the
construction of 1-dimensional piecewise constant bounding processes
$B_{n,i}^\uparrow(\cdot)$ and $B_{n,i}^\downarrow$ in Section
\ref{SEC-STR-APPROX} that
\begin{align*}
  \sup_{t \in [0,1]}\vert B_{n,i}^\uparrow(t) - B_{n,i}^{\downarrow}(t) \vert &= \max_{j
  = 1,\ldots,2^n} \vert U_{j,n}^\uparrow - L_{j,n}^\downarrow \vert\\
  &\leq \max_{j = 1,\ldots,2^n} \left\{ \vert U_{j,n}^\uparrow - M_{j,n} \vert + \vert M_{j,n} -
    m_{j,n} \vert + \vert m_{j,n} - L_{j,n}^\downarrow \vert \right\}
  \\
  &\leq \max_{j = 1,\ldots,2^n} \left\{ 2^{-(n+1)/2} + \vert M_{j,n} -
    m_{j,n} \vert + 2^{-(n+1)/2} \right\}\\
  &\overset{D}{=} 2^{-n/2} + \max_{j=1,\ldots,2^n} \left(\sup_{t \in
    [0,2^{n}]} W_j(t) - \inf_{t \in [0,2^{n}]} W_j(t) \right)
\end{align*}
where $(W_j(t):j = 1,\ldots,2^n)$ are $2^n$ independent copies of
standard Brownian motion.  Here, the notation $\overset{D}{=}$ is used
to denote equality in distribution. If we let
\[\bar{Z}_j := 1 + \sup_{t \in [0,1]} W_j(t) - \inf_{t \in [0,1]}
W_j(t),\] then due to self-similarity of Brownian motion,
\begin{align*}
  \sup_{t \in [0,1]}\vert B_{n,i}^\uparrow(t) - B_{n,i}^{\downarrow}(t) \vert 
  &\overset{D}{=} 2^{-n/2}  + 2^{-n/2} \left(\sup_{t \in
    [0,1]} W_j(t) - \inf_{t \in [0,1]} W_j(t)\right) \\
  &=  2^{-n/2}  \max_{j = 1,\ldots,2^n} \bar{Z}_j.
\end{align*}
Since the approximations $B_{n,i}$ are independently obtained for each
$i = 1,\ldots,d,$ we have 
\begin{align*}
  \sup_{t \in [0,1]}\vert B_{n,i}^\uparrow(t) -
  B_{n,i}^{\downarrow}(t) \vert = \max_{k = 1,\ldots,d2^n} \bar{Z}_k, 
\end{align*}
where $(\bar{Z}_k: k =1,\ldots,d2^n)$ are $d2^n$ independent copies of
$\bar{Z} := 1+\sup_{t \in [0,1]}W(t) - \inf_{t \in [0,1]}W(t).$ For
notational convenience, let us denote
$M_n = \max_{j = 1,\ldots,d2^n} \bar{Z}_j.$ Then
\begin{align*}
  P \left( N' > n\right) &= P \left( 2^{-n/2} M_n> \frac{D}{K_1K_2} \right)
  = P \left( D < K_1K_2 \frac{M_n}{2^{n/2}} \right)\\
  &= P\left( L(\mathbf{Z};\mathbf{Y}^{\varepsilon_n}(\Tl)) - K_1K_2
    \frac{M_n}{2^{n/2}} < V <
    L(\mathbf{Z};\mathbf{Y}^{\varepsilon_n}(\Tl)) + K_1K_2
    \frac{M_n}{2^{n/2}}\right),
\end{align*}
because of our earlier definition that $D = \vert V -
L(\mathbf{Z};\mathbf{Y}^{\varepsilon_n}(\Tl)) \vert.$ As $V \sim
\text{Unif}[0,1],$ it is immediate that 
\begin{align*}
  P \left( N' > n\right) &= \frac{2K_1K_2}{2^{n/2}}E \left[ M_n
                           \right] = \Theta\left( \sqrt{\frac{n}{2^n}}
  \right), 
\end{align*}
where the second equality follows from the observation that
$E[M_n] = \Theta( \sqrt{n}),$ which is proved in Lemma
\ref{Lem-exp-max-BM} in appendix.  Therefore,
$P(2^{N'} > x) = \Theta(x^{-0.5}\sqrt{\log x}).$ As the random
variable $2^{N'}$ has regularly varying tails with index $-0.5,$ 
\begin{align*}
  E \left[ 2^{N'} \right] = \infty,
\end{align*}
and consequently, expected total computational cost is infinite.
% and consequently,
% \[P(\text{Computational cost } > x) = \Theta(x^{-0.5}\sqrt{\log x}).\]
% Since the computational cost has regularly varying tails with index
% -0.5, we have expected computational cost to be infinite. 

An alternative, intuitive explanation for why the expected termination
time is infinite is as follows: Note that conditional on $\mathbf{Z}$
and $\mathbf{Y}^{\varepsilon_n}(\Tl)$, the distance
$D=\left\vert V-L(\mathbf{Z}; \mathbf{Y}^{\varepsilon_n}(\Tl)
\right\vert $
is less than $\delta $ with probability $O\left( \delta \right)$
(because $V$ is uniformly distributed). Thus, if the cost of
generating $\mathbf{Y}^{D}$ (required to decide whether to accept or
reject) is $C\left( D\right) $, the running time of the algorithm
would be finite if $\int_{0}^{1}C\left( u\right) du < \infty .$
Unfortunately, however, the cost of producing an $\varepsilon $-strong
approximation to Brownian motion ($\mathbf{X}^{\varepsilon }$) is
roughly $O\left( 1/\varepsilon ^{2}\right) $ (see, for example,
\cite{MR2995793}) and therefore $C\left( D\right) \geq c/D^{2},$ with
positive probability, for some $c>0$, which results in an infinite
expected running time.

\section{Conclusions} 
We provide the first exact sampling algorithm to obtain samples from a
multi-dimensional reflected Brownian motion. The algorithm relies on a
novel conditional acceptance / rejection step, which is
implemented % by first
% simulating enough information in order to localize the RBM
% $\mathbf{Y}(\cdot)$ to an interval (containing $T$) where the RBM does
% not hit the reflecting boundary.
by carefully refining $\varepsilon-$strong approximations of the
reflected Brownian motion path $\mathbf{Y}(\cdot)$ until we can
conclusively accept or reject a proposal $\mathbf{Z}$ from a suitable
uniform distribution. Unfortunately, as shown in Section
\ref{Sec-Complexity}, the proposed algorithm has expected termination
time because of the large amount of computational effort required to
conclusively decide whether $V < L(\mathbf{Z}; \mathbf{Y}(\Tl))$ when
the proposal likelihood $L(\mathbf{Z}; \mathbf{Y}(\Tl))$ and the
uniform random variable $V$ are close. It may be of interest to the
readers to know whether the entire exact sampling scheme can be
executed with finite expected computational effort if we, somehow, are
able to resolve the difficulty in deciding whether
$V < L(\mathbf{Z}; \mathbf{Y}(\Tl))$ with finite expected
computational effort. We believe this is indeed the case because of
the following reasoning.
% If this difficulty is resolved, it may be of interest to
% know whether the entire exact sampling scheme can be executed in
% finite expected termination time. We believe that this is indeed the
% case following the reasoning below: 

Apart from the refine until accept / reject step in Algorithm
\ref{REJ-SAMP-ALGO}, the only other step where we execute a `while'
loop performing a random comparison is in the pre-conditioning
procedure in Algorithm \ref{APPROX-RBM}. % The
% expected number of iterations of this while loop in the
% preconditioning step is summarized below.
Recall that the preconditioning step must develop
a %$\varepsilon-$ approximation to
piecewise approximation to Brownian motion that is accurate enough to
satisfy
$\varepsilon_n := \Vert \mathbf{B}_n(\cdot) - \mathbf{B}(\cdot) \Vert
< (K_1 + 1)Y_i(T),$
for $i = 1,\ldots,d,$ in order to identify $(\Tl,
\Tr).$ % To develop a
% Brownian approximation that is accurate enough to accomplish this, we
% need to expend at least $c/\varepsilon_n^{2} = c'/(\min_i Y_i(T))^2$
% computational effort with positive probability, for some positive
% constants $c$ and $c'.$
If the probability density of $Y_i(T)$ evaluated at 0 is positive (as
in the 1-dimensional RBM case), then the probability that $Y_i(T)$ is
smaller than $\delta$ is at least $c_1\delta$ (for every
$\delta < \delta'$ suitably small), and the computational effort
required to generate a Brownian approximation that satisfies
$\varepsilon_n < (K_1 + 1)\delta$ is larger than $c_2/\delta^2$ with
positive probability; here, $c_1$ and $c_2$ are suitable positive
constants. As the required computational effort
$O(1/(\min_i Y_i(T))^2)$ is high when $Y_i(T)$ is close to $0$ for
some $i,$ the expected computational effort required in the
preconditioning step is at least
\[\int_{c_2/\delta'^2}^{\infty} P(\text{computational cost} > u)du \geq
\int_{c_2/\delta'^2}^\infty c_1\sqrt{c_2/u}\ du = \infty.\]
However, this difficulty can be easily resolved if we imagine, for a
moment, that it is possible to resolve the earlier difficulty
explained in Section \ref{Sec-Complexity} (on deciding whether
$V < L(\mathbf{Z};Y(\Tl))$ within finite expected time), and it is
possible to obtain exact samples of $\mathbf{Y}(T)$ whenever
$\mathbf{Y}(T)$ is bounded away from the reflecting barrier;
specifically, let us assume we can obtain samples of $\mathbf{Y}(T)$
with finite expected computational effort $C_\gamma$ when
$\min_i Y_i(T) > \gamma$ for some fixed constant $\gamma \in (0,1).$
In that case, we % fix a positive
% constant $\gamma,$ and
first obtain an exact sample of
$\mathbf{Y}(t')$ for the latest $t' \leq T$ during which % we can infer
% from the strong approximation of RBM that
$\min_i \mathbf{Y}_i(t') > \gamma.$ To be specific, define 
$t' := \sup\{t \leq T: \min_i Y_i(t) > \gamma\},$
$\tilde{\mathbf{Y}} := \mathbf{Y}_i(t')$ and $T_\gamma := T - t'.$  
% In particular, if we let
% $T_0 = T - t' = T - \sup\{t \leq T: \min_i Y_i(t) > \gamma\},$ 
Then due to the Markov property and self-similarity of RBM, the
original objective of obtaining a sample of $\mathbf{Y}(T)$ can be
equivalently written as follows: Obtain a sample of
$\mathbf{Y}_{new}(T_\gamma/\gamma^2),$ where $\mathbf{Y}_{new}(\cdot)$ is
also an RBM obtained by shifting and scaling the RBM
$\mathbf{Y}(\cdot)$ as in
$\mathbf{Y}_{new}(t) := \mathbf{Y}(t' + \gamma^2 t )/\gamma$ with
initial condition $\mathbf{Y}_{new}(0) := \tilde{\mathbf{Y}}/\gamma.$
% Then due to the Markov property of RBM, the original objective of
% obtaining a sample of $\mathbf{Y}(T)$ translates to a modified RBM
% simulation problem of obtaining a sample of $\mathbf{Y}(T_0)$ with
% initial condition $\mathbf{Y}(0) = \tilde{\mathbf{Y}}.$ Due to the
% self-similarity of driving Brownian motion, this objective, in turn,
% is equivalent to obtaining a sample $\mathbf{Y}_{new}(T_0/\gamma^2)$
% where $\mathbf{Y}_{new}(\cdot)$ is the scaled RBM
% $\mathbf{Y}_{new}(t) = \mathbf{Y}(\gamma^2 t)/\gamma$ with initial
% condition
% $\tilde{\mathbf{Y}}/\gamma.$% due to the self-similarity property of the
% % driving Brownian motion.
Thus, even if the original problem of detecting $(\Tl, \Tr)$ is
difficult when $\mathbf{Y}(T) < \gamma,$ by a suitable translation and
scaling (magnification) of the underlying Brownian and RBM paths, we
have a new, but equivalent, objective of sampling from
$\mathbf{Y}_{new}(T_\gamma/\gamma^2) = \mathbf{Y}(T)/\gamma.$ In case if
$\mathbf{Y}_{new}(T_\gamma/\gamma^2)$ is smaller than $\gamma$ as well, we
perform a similar translation and scaling once again
recursively. % Observe that it takes $\Theta(d^3/\gamma^2)$ expected
% computational effort to decide if $\min_i Y(T) < \gamma$ and identify
% $t'$ where $\min_i \mathbf{Y}_i(t') > \gamma.$ As mentioned earlier,
% if we assume it is possible to conclusively decide whether
% $V < L(\mathbf{Z}; \mathbf{Y}(\Tl))$ within finite expected
% computational effort, then a simple recursive algorithm complexity
% analysis (see, for example, Chapter 4 of \cite{Cormen:2001:IA:580470})
% yields us that the total expected computational effort of the
% described recursive procedure is
% $\Theta(d^3/(\gamma^2P(\min_i \mathbf{Y}_i(T) > \gamma))),$ which is
% finite.
Since $E[T_\gamma/\gamma^2]$ is uniformly bounded as a function of
$\gamma,$ a simple recursive algorithm complexity analysis (see, for
example, Chapter 4 of \cite{Cormen:2001:IA:580470}) yields us that the
total expected computational effort of the described recursive
procedure is $O(C_\gamma/P(\min_i Y_i(T) > \gamma)),$ which is finite,
as per our assumption on $C_\gamma$.

As the described recursive construction for the preconditioning step
is built on the assumption that we can conclusively decide whether
$V < L(\mathbf{Z}; \mathbf{Y}(\Tl))$ within finite expected
computational effort, we identify the difficulty explained carefully
in Section \ref{Sec-Complexity} as the only fundamental bottleneck in
obtaining exact samples of multi-dimensional RBM. Future research that
addresses this bottleneck by means of new techniques will make the
proposed algorithm, which is currently of theoretical importance, to
be more suitable for practice as well.

\appendix

\section*{Appendix}

Here we provide the proof of Lemma \ref{LIPSH-CONT}, and present explicit
expressions for the constants $K_{\pi}, K_{\rho}, c_{\pi}\text{ and }
c_{\rho }.$ For proving Lemma \ref{LIPSH-CONT}, we need the following result.

\begin{lemma}
For any given $U > L,r > 0,$ the function $\gamma(L,U;r,a,b)$ defined in %
\eqref{EXIT-PROB-EXPRESSION} is Lipschitz continuous with respect to the
variables $a \text{ and } b;$ that is, 
\begin{align*}
|\gamma(L,U;r,a_{1},\cdot)-\gamma(L,U;r,a_{2},\cdot)| < K(L,U,r)|a_{1}-a_{2}|
\\
|\gamma(L,U;r,\cdot,b_{1})-\gamma(L,U;r,\cdot,b_{2})| <
K(L,U,r)|b_{1}-b_{2}|,
\end{align*}
for all $a_{1},a_{2},b_{1},b_{2} \in(L,U).$ The Lipschitz constant $K(L,U,r)$
is given by 
\begin{equation*}
K(L,U,r) := \sum_{j \geq1} K_{j} = \frac{8(U-L)}{r} \sum_{j \geq1}
j\exp\left( -\frac{2}{r}(U-L)^{2}(j-1)^{2}\right) .
\end{equation*}
\label{LIPSCH-GAMMA}
\end{lemma}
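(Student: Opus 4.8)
The plan is to establish Lipschitz continuity in the variable $a$ (the bound in $b$ then following by symmetry, since $\gamma(L,U;r,a,b)$ is invariant under swapping $a$ and $b$) by bounding the partial derivative $\partial_a\gamma$ uniformly over the square $(L,U)^2$ and invoking the mean value theorem. Fix $b\in(L,U)$ and restrict to $a\in(L,U)$, so that the indicator in \eqref{EXIT-PROB-EXPRESSION} is identically $1$ and $\gamma(L,U;r,a,b)=1-\sum_{j\geq 1}(\sigma_j-\tau_j)$. First I would differentiate this series term by term; to justify the interchange of sum and derivative I would appeal to the Weierstrass $M$-test, using precisely the per-term bounds derived below, which form a convergent majorant independent of $a$.

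The core of the argument is the estimation of $\partial_a(\sigma_j-\tau_j)$. Writing $w:=U-L$, the key observation is that for $a,b\in(L,U)$ every linear factor appearing inside the exponentials has controlled sign and magnitude: the factors $wj+L-a$ and $wj+L-b$ (and likewise $wj-U+a$, $wj-U+b$) all lie in $(w(j-1),wj)$, while $wj+a-b$ and $wj+b-a$ lie in $(w(j-1),w(j+1))$; in particular all are strictly positive for $j\geq 1$. Differentiating the first exponential of $\sigma_j$ by the chain rule produces a prefactor $\tfrac{2}{r}(wj+L-b)$, whose absolute value is $<\tfrac{2wj}{r}$, times the exponential itself, which is bounded by $\exp(-\tfrac{2}{r}w^2(j-1)^2)$ because the product of the two positive linear factors exceeds $w^2(j-1)^2$. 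The same bound $\tfrac{2wj}{r}\exp(-\tfrac{2}{r}w^2(j-1)^2)$ holds for the derivative of each of the remaining three exponentials; for the two $\tau_j$ terms the relevant exponent is $-\tfrac{2wj}{r}(wj\pm(a-b))$, which is at most $-\tfrac{2}{r}w^2j(j-1)$, and here I would use $j(j-1)\geq(j-1)^2$ to reduce to the same majorant.

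Collecting the four contributions gives $|\partial_a(\sigma_j-\tau_j)|<\tfrac{8wj}{r}\exp(-\tfrac{2}{r}w^2(j-1)^2)=K_j$, and summing over $j$ yields
\[
\left|\frac{\partial}{\partial a}\gamma(L,U;r,a,b)\right|\leq\sum_{j\geq 1}\left|\frac{\partial}{\partial a}(\sigma_j-\tau_j)\right|<\sum_{j\geq 1}K_j=K(L,U,r),
\]
which both supplies the convergent majorant needed to legitimize the term-by-term differentiation and furnishes the uniform derivative bound. The mean value theorem then gives $|\gamma(L,U;r,a_1,b)-\gamma(L,U;r,a_2,b)|<K(L,U,r)|a_1-a_2|$ for $a_1\neq a_2$, and the estimate in $b$ follows from the symmetry $\gamma(L,U;r,a,b)=\gamma(L,U;r,b,a)$, which is immediate from the $a\leftrightarrow b$ invariance of $\sigma_j$, $\tau_j$ and the indicator.

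The main obstacle is the careful bookkeeping in the second step: one must verify the sign and the $(w(j-1),wj)$-containment of every linear factor for all admissible $a,b$, and keep all exponents controlled by the single majorant $\exp(-\tfrac{2}{r}w^2(j-1)^2)$ — in particular handling the $\tau_j$ exponent through $j(j-1)\geq(j-1)^2$. Once these elementary inequalities are in place, the summation and the application of the mean value theorem are routine.
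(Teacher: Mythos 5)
Your proof is correct and follows essentially the same route as the paper: the heart of both arguments is the per-term derivative bound $\bigl|\partial_a(\sigma_j-\tau_j)\bigr|<K_j=\tfrac{8(U-L)j}{r}\exp\bigl(-\tfrac{2}{r}(U-L)^2(j-1)^2\bigr)$, summed to give $K(L,U,r)$ — indeed you supply the factor-containment estimates and the $j(j-1)\geq(j-1)^2$ step that the paper leaves as ``easily checked.'' The only (immaterial) difference is in the limiting step: you differentiate the series term by term (justified by the uniform convergence of the derivative series) and apply the mean value theorem to $\gamma$ itself, whereas the paper bounds the derivatives of the partial sums $\gamma_n$, concludes each is $K$-Lipschitz, and invokes the fact that a uniform limit of $K$-Lipschitz functions is $K$-Lipschitz.
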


\begin{proof}
Let $\gamma_{n}(a,b) = 1-\sum_{j=1}^{n}(\sigma_{j}-\tau_{j}).$ Since $a,b$
take values in $(L,U),$ it is easily checked that for all $j \geq1$ both 
\begin{align*}
\left| \frac{d}{d a}(\sigma_{j}-\tau_{j})\right| < K_{j}, \text{ and }
\left| \frac{d}{d b}(\sigma_{j}-\tau_{j})\right| < K_{j},
\end{align*}
where 
\begin{equation*}
K_{j}:=\frac{8(U-L)j}{r}\exp\left( -\frac{2}{r}(U-L)^{2}(j-1)^{2}\right) .
\end{equation*}
Then it is immediate that for all $n,$ 
\begin{equation*}
\left| \frac{d \gamma_{n}(a,b)}{d a}\right| < \sum_{j=1}^{\infty }K_{j} 
\text{ and } \left| \frac{d \gamma_{n}(a,b)}{d b}\right| <
\sum_{j=1}^{\infty}K_{j}.
\end{equation*}
As a consequence, we use the following elementary properties of Lipschitz
continuity to establish the Lipschitz continuity of $\gamma(L,U;r,a,b)$ with
respect to variables $a$ and $b$:

\begin{enumerate}
\item[1)] If a differentiable function $f(\cdot )$ on a convex domain
  is such that its first derivative $|f^{\prime }(x)|<K$ for some
  constant $K,$ then the function $f\left( \cdot \right) $ is
  Lipschitz continuous with Lipschitz constant at most $K.$

\item[2)] If a sequence of Lipschitz functions $f_{n}(\cdot)$ all
  having Lipschitz constant bounded by $K$ converge uniformly to
  $f(\cdot),$ then $%
  f(\cdot)$ is also Lipschitz continuous with Lipschitz constant at
  most $K.$
\end{enumerate}

Since $\gamma_{n}(a,b)$ converge uniformly to $\gamma(a,b)$ for $a,b
\in(L,U),$ it follows immediately from the above two facts that $\gamma
(L,U;r,a,b)$ is Lipschitz continuous with Lipschitz constant at most 
\begin{equation*}
K(L,U,r) := \sum_{j \geq1} K_{j} = \frac{8(U-L)}{r} \sum_{j \geq1}
j\exp\left( -\frac{2}{r}(U-L)^{2}(j-1)^{2}\right).
\end{equation*}
\end{proof}

\paragraph{Proof of Lemma \protect\ref{LIPSH-CONT}.}

For all $x \in(L^{\downarrow},U^{\uparrow}),$ 
\begin{equation}
\pi(x) = \exp\left( -\frac{1}{2}\left( x- \frac{s}{l}v \right) ^{2}\left. 
\frac{}{}\right/ \left( \frac{s(l-s)}{l}\right) \right) \leq1 =: c_{\pi}.
\label{CPI-BND}
\end{equation}
The Lipschitz continuity of $\pi(\cdot)$ follows from the boundedness of its
first derivative $\pi^{\prime}(\cdot)$ on the convex domain $(L,U):$ for all 
$x \in(L,U),$ 
\begin{equation}
\left| \frac{{d} \pi(x)}{{d} x}\right| \leq\frac {|xl-sv|}{s(l-s)} < \frac{%
\max\{|Ul - sv|, |Ll- sv|\}}{s(l-s)}=: K_{\pi}.  \label{LIPSCH-PI}
\end{equation}
To prove the Lipschitz continuity of $\rho(\cdot),$ we first note the
boundedness of $\gamma(L,U;r,\cdot,\cdot):$ Simple substitution will yield
that $\gamma(L,U;r,a,b) = 0$ whenever either $a$ or $b$ equals one of $L,U.$
Then due to the Lipschitz continuity of $\gamma(L,U;r,\cdot,\cdot)$ from
Lemma \ref{LIPSCH-GAMMA}, we have that 
\begin{equation}
|\gamma(L,U;r,a,b)| \leq K(L,U,r)(U-L).  \label{GAMMA-BOUND}
\end{equation}
Now consider the first term $\gamma_{1}(x) \gamma_{2}(x)$ in %
\eqref{RHO-GAMMA-EXP}:

\begin{enumerate}
\item[1)] Because of \eqref{GAMMA-BOUND}, $|\gamma_{1}(\cdot)|$ and
  $|\gamma _{2}(\cdot)|$ are bounded by $K(L^{\downarrow},
  U^{\uparrow}, s)(U^{\uparrow }-L^{\downarrow})$ and
  $K(L^{\downarrow}, U^{\uparrow}, l-s)(U^{\uparrow
  }-L^{\downarrow}),$ respectively, in the interval $x
  \in(L^{\downarrow },U^{\uparrow}).$

\item[2)] From Lemma \ref{LIPSCH-GAMMA}, we have that
  $\gamma_{1}(\cdot)$ and $%
  \gamma_{2}(\cdot)$ are Lipschitz continuous (with respect to the
  variable $x$%
  ) with Lipschitz constants at most $K(L^{\downarrow}, U^{\uparrow},
  s)$ and $%
  K(L^{\downarrow}, U^{\uparrow}, l-s)$ respectively.
\end{enumerate}

From the above two observations, we conclude that
$\gamma_{1}(\cdot)\gamma _{2}(\cdot)$ is Lipschitz continuous with
respect to $x$ with Lipschitz constant at most
\begin{equation*}
K_{1,2}:=2K(L^{\downarrow},U^{\uparrow},s)K(L^{\downarrow},U^{\uparrow
},l-s)(U^{\uparrow}-L^{\downarrow}).
\end{equation*}
This is because if $f,g$ are Lipschitz continuous with respective Lipschitz
constants $K_{f}$ and $K_{g}$ and absolute bounds $C_{f}$ and $C_{g},$ then $%
fg$ is Lipschitz continuous with Lipschitz constant at most $%
C_{f}K_{g}+C_{g}K_{f}.$ Using the same reasoning, the Lipschitz constants of
other terms in \eqref{RHO-GAMMA-EXP}, namely $\gamma_{3}(\cdot)\gamma_{4}(%
\cdot ),\gamma_{5}(\cdot)\gamma_{6}(\cdot)$ and $\gamma_{7}(\cdot)%
\gamma_{8}(\cdot)$ are at most 
\begin{align*}
K_{3,4} & :=2K(L^{\uparrow},U^{\uparrow},s)K(L^{\uparrow},U^{\uparrow
},l-s)(U^{\uparrow}-L^{\uparrow}), \\
K_{5,6} & :=2K(L^{\downarrow},U^{\downarrow},s)K(L^{\downarrow
},U^{\downarrow},l-s)(U^{\downarrow}-L^{\downarrow}),\text{ and } \\
<K_{7,8} & :=2K(L^{\uparrow},U^{\downarrow},s)K(L^{\uparrow},U^{\downarrow
},l-s)(U^{\downarrow}-L^{\uparrow})
\end{align*}
respectively. Therefore, $\rho(x)$ is Lipschitz continuous with Lipschitz
constant $K_{\rho}$ given by, 
\begin{equation*}
K_{\rho}:=K_{1,2}+K_{3,4}+K_{5,6}+K_{7,8}.
\end{equation*}
Since $\rho(x)=0$ whenever $x$ takes either $L^{\downarrow}$ or $U^{\uparrow
},$ using Lipschitz continuity of $\rho$ we reason that, 
\begin{equation*}
|\rho(x)|\leq K_{\rho}(U^{\uparrow}-L^{\downarrow})=:c_{\rho}.
\end{equation*}
This along with \eqref{CPI-BND} and \eqref{LIPSCH-PI} proves the claim.
\hfill$\Box$

\begin{lemma}
  Recall the definition
  $\bar{Z} := 1 + \sup_{t \in [0,1]}W(t) - \inf_{t \in [0,1]} W(t),$
  where $(W(t): t \in [0,1])$ is a standard Brownian motion. If 
  $(\bar{Z}_i: i =1,\ldots,k)$ are $k$ independent copies of
  $\bar{Z},$ then
  \begin{align*}
    E \left[\max_{i=1,\ldots,k} \bar{Z}_i \right] = \Theta \left(\sqrt{\log
    k} \right), \quad \text{ as } k \rightarrow \infty.  
  \end{align*}
\label{Lem-exp-max-BM}
\end{lemma}
\begin{proof}
We first observe that 
\begin{align*}
  \max_{i=1,\ldots,k} \bar{Z}_i 
&= \max_{i=1,\ldots,k} \left( 1 + \sup_{t \in [0,1]}W(t) - \inf_{t \in
  [0,1]} W(t) \right)\\ 
 &\leq 1 + \max_{i=1,\ldots,k}  \sup_{t \in [0,1]}W(t) +
   \max_{i=1,\ldots,k} \left( -\inf_{t \in  [0,1]} W(t)\right) 
\end{align*}
As $\sup_{t \in [0,1]}W(t) \overset{D}{=} -\inf_{t \in [0,1]}W(t),$ we
have,
\begin{align*}
  1 + \max_{i=1,\ldots,k}  \sup_{t \in [0,1]}W(t) \overset{D}{\leq} \max_{i=1,\ldots,k} \bar{Z}_i 
  \overset{D}{\leq} 1  + 2\max_{i=1,\ldots,k}  \sup_{t \in [0,1]}W(t), 
\end{align*}
where the notation $X \overset{D}{\leq} Y$ denotes that $X$ is
stochastically upper bounded by $Y.$ Further, as $\sup_{t \in
  [0,1]}W(t) \overset{D}{=} \vert Z \vert$ when $Z$ follows standard
normal distribution, we have 
\begin{align}
  E\left[\max_{i=1,\ldots,k} \bar{Z}_i\right] = \Theta \left( E\left[
  \max_{i=1,\ldots,k}  \sup_{t \in [0,1]}W(t) \right]\right) = \Theta \left(E\left[
  \max_{i=1,\ldots,k} \vert Z_i \vert \right]\right),
\label{Inter-L3}
\end{align}
as $k \rightarrow \infty.$ Here, $(Z_i: i = 1,\ldots,k)$ are simply
$k$ independent copies of a standard normal variable. Next, if we
denote the positive and negative parts of $Z_i$ as
$Z_i^+ = \max\{Z_i,0\}$ and $Z_i^- = -\min\{Z_i,0\},$ then
$\vert Z_i \vert = Z_i^+ + Z_i^{-}.$ Further, as
\[\max_{i=1,\ldots,k} Z_i^+ \leq \max_{i = 1,\ldots,k} \vert Z_i \vert \leq
\max_{i=1,\ldots,k} Z_i^+ + \max_{i=1,\ldots,k} Z_i^-,\] 
it follows from (\ref{Inter-L3}) that 
\begin{align*}
  E\left[\max_{i=1,\ldots,k} \bar{Z}_i\right]  = \Theta \left(
 E\left[ \max_{i=1,\ldots,k} Z_i^+ \right]\right) = \Theta \left(
 E\left[ M^+ \right]\right),
\end{align*}
where $M^+$ is the positive part of $M := \max_{i=1,\ldots,k}Z_i.$
Since $E[M] = \Theta(\sqrt{\log k})$ and
$E[M^-] = \int_0^\infty (P(Z < -u))^k du \rightarrow 0$ as
$k \rightarrow \infty,$ we obtain
$E[M^+] = E[M] + E[M^-] = \Theta(\sqrt{\log k}),$ thus proving the
claim.

\end{proof}

\bibliographystyle{abbrv}
\bibliography{RBM_Exact,reference}

\end{document}